\newtheorem{theorem}{Theorem}
\newtheorem{definition}[theorem]{Definition}
\newtheorem{proposition}[theorem]{Proposition}
\newtheorem{corollary}[theorem]{Corollary}
\newtheorem{lemma}[theorem]{Lemma}
\theoremstyle{remark}
\newtheorem{example}[theorem]{Example}
\def\e{\mathrm{e}}
\def\CaA{\mathcal{A}}
\def\CaD{\mathcal{D}}
\def\CaH{\mathcal{H}}
\def\CaC{\mathcal{C}}
\def\CaX{\mathcal{X}}
\def\N{\mathbb{N}}
\def\Z{\mathbb{Z}}
\def\R{\mathbb{R}}
\def\Q{\mathbb{Q}}
\title{Characterizing affine $\mathcal C$-semigroups}
\date{}
\author{
J. D. D\'{\i}az-Ram\'{\i}rez
\footnote{
Departamento de Matem\'aticas/INDESS (Instituto Universitario para el Desarrollo Social Sostenible), Universidad de C\'adiz,
E-11406 Jerez de la Frontera (C\'{a}diz, Spain).
E-mail: juandios.diaz@uca.es.}\\
J. I. Garc\'{\i}a-Garc\'{\i}a
\footnote{
Departamento de Matem\'aticas/INDESS (Instituto Universitario para el Desarrollo Social Sostenible),
Universidad de C\'adiz, E-11510 Puerto Real  (C\'{a}diz, Spain).
E-mail: ignacio.garcia@uca.es.}\\
D. Mar\'{\i}n-Arag\'on
 \footnote{
     Departamento de Matem\'aticas,
     Universidad de C\'adiz, E-11510 Puerto Real  (C\'{a}diz, Spain).
     E-mail: daniel.marin@uca.es.}
\\
A. Vigneron-Tenorio
\footnote{
Departamento de Matem\'aticas/INDESS (Instituto Universitario para el Desarrollo Social Sostenible), Universidad de C\'adiz,
E-11406 Jerez de la Frontera (C\'{a}diz, Spain).
E-mail: alberto.vigneron@uca.es.}
}
\begin{document}
\maketitle
\begin{abstract}
Let $\mathcal C \subset \N^p$ be a finitely generated integer cone and $S\subset \mathcal C$ be an affine semigroup such that the real cones generated by $\mathcal C$ and by $S$ are equal. The semigroup $S$ is called $\mathcal C$-semigroup if $\mathcal C\setminus S$ is a finite set.
In this paper, we characterize the $\mathcal C$-semigroups from their minimal generating sets, and we give an algorithm to check if $S$ is a $\mathcal C$-semigroup and to compute its set of gaps. We also study the embedding dimension of $\mathcal C$-semigroups obtaining a lower bound for it, and introduce some families of $\mathcal C$-semigroups whose embedding dimension reaches our bound. In the last section, we present a method to obtain a decomposition of a $\mathcal C$-semigroup into irreducible $\mathcal C$-semigroups.
\end{abstract}

 \smallskip
 {\small \emph{Keywords:}  affine semigroup, $\CaC$-semigroup, embedding dimension, gap of a semigroup, generalized numerical semigroup, irreducible semigroup.}

 \smallskip
 {\small \emph{2020 Mathematics Subject Classification:} 20M14 (Primary), 68R05 (Secondary).}

\section*{Introduction}

An affine semigroup $S\subset \N^p$ is called $\CaC_S$-semigroup if $\CaC_S\setminus S$ is a finite set where $\CaC_S\subset \N^p$ is the minimal integer cone containing it. These semigroups are a
natural generalization of numerical semigroups, and several of their invariants can be generalized. For a given numerical semigroup $G$, it is well-known that $\N\setminus G$ is finite; in fact, $G\subset \N$ is a numerical semigroup if it is a submonoid of $\N$ and $\N\setminus G$ is finite (for topics related with numerical semigroups see \cite{libro_rosales} and the references therein). In general, it does not happen for affine semigroups.

$\CaC$-semigroups are introduced in \cite{Csemigroup}, where the authors study several properties about them (for example, an extended Wilf's conjecture for $\CaC$-semigroups is given). These semigroups appear in different contexts: when the integer points in an infinite family of some homothetic convex bodies in $\R^p_{\ge}$ are considered (see, for instance, \cite{politope_semig}, \cite{convex_body} and the references therein), or when the non-negative integer solutions of some modular Diophantine inequality are studied (see \cite{prop_mod}), et cetera. In case the cone $\CaC$ is $\N^p$, $\N^p$-semigroups are called generalized numerical semigroups and they were introduced in \cite{GenSemNp}. Recently, in \cite{resolucion_maxima} it is proved that the minimal free resolution of the associated algebra to any $\CaC$-semigroup has maximal projective dimension possible.

In this context, $\N^p$-semigroups are characterized in \cite{CFU}, but the general problem was opened, {\em given any affine semigroup $S$, how to detect if $S$ is or not a $\CaC_S$-semigroup?} This work's primary goal is to determine the conditions that any affine semigroup given by its minimal set of generators has to verify to be a $\CaC_S$-semigroup. We solve this problem in Theorem \ref{main_theorem}, and in Algorithm \ref{algoritmo_check_Csemig} we provide a computational way to check it.

Other open problem is to compute the set of gaps of any $\CaC$-semigroup defined by its minimal generating set. We solve this problem by means of setting a finite subset of $\CaC$ containing all the gaps of a given $\CaC$-semigroup. Algorithm \ref{computing_gaps_Csemig} computes the set of gaps of $\CaC$-semigroups.

In this paper, we also go in-depth to study the embedding dimension of $\CaC$-semigroups. In \cite[Theorem 11]{Csemigroup}, a lower bound of the embedding dimension of $\N^p$-semigroups is provided, and some families of $\N^p$-semigroups reaching this bound are given. Besides, in \cite[Conjecture 12]{Csemigroup}, it is proposed a conjecture about a lower bound for the embedding dimension of any $\CaC$-semigroup. In section \ref{sec_embedding_dimension}, we introduce a lower bound of the embedding dimension of any $\CaC$-semigroup, and some families of $\CaC$-semigroups whose embedding dimension is equal to this new bound.

An important problem in Semigroup Theory is to determine some decomposition of a semigroup into irreducible semigroups (for example, see \cite[Chapter 3]{libro_rosales} for numerical semigroups, or its generalization for $\N^p$-semigroups in \cite{irreducible_GNS}). We propose an algorithm to compute a decomposition of any $\CaC$-semigroups into irreducible $\CaC$-semigroups.

The results of this work are illustrated with several examples. To this aim,
we have used third-party software, such as Normaliz \cite{normaliz},
and the libraries {\tt CharacterizingAffineCSemigroup} and {\tt Irreducible} \cite{PROGRAMA} developed by the authors in Python \cite{python}.

The content of this work is organized as follows. Section \ref{sec_prelimiraries} introduces the initial definitions and notations used throughout the paper, mainly related to finitely generated cones. In Section \ref{sec_main}, a characterization of $\CaC$-semigroups is provided, and an algorithm to check if an affine semigroup is a $\CaC$-semigroup. Section \ref{sec_gaps} is devoted to give an algorithm to compute the set of gaps of a $\CaC$-semigroup. Section \ref{sec_embedding_dimension} makes a study of the minimal generating sets of $\CaC$-semigroups formulating explicitly a lower bound for their embedding dimensions.
Finally, in Section \ref{sec_irreducible} an algorithm for computing a decomposition of a $\CaC$-semigroup into irreducible $\CaC$-semigroups is presented.

\section{Preliminaries}\label{sec_prelimiraries}

The sets of real numbers, rational numbers, integer numbers and the non-negative integer numbers are denoted by $\R$, $\Q$, $\Z$ and $\N$, respectively. Given $A$ a subset of $\R$, $A_\ge$ is the set of elements in $A$ greater than or equal to zero. For any $n\in \N$, $[n]$ denotes the set $\{1,\ldots n\}$. Given an element $x$ in $\R^n$, $||x||_1$ denotes the sum of the absolute value of its entries, that is, its 1-norm. In this paper we assume the set $\{\mathbf{e}_1,\ldots , \mathbf{e}_p\}$ is the canonical basis of $\R^p$.

For a non empty subset of $\R_\ge ^p$, $B$, we define the cone
$$L(B):= \left\{\sum_{i=1}^n \lambda_i \mathbf{b}_i \mid n\in \N, \{\mathbf{b}_1,\ldots ,\mathbf{b}_n\}\subset B,\text{ and } \lambda_i \in \R_{\geq}, \forall i \in [n] \right\}.$$

Given a real cone $\CaC\subset \R^p_\ge$, it is well-known that $\CaC\cap \N^p$ is finitely generated if and only if there exists a rational point in each extremal ray of $\CaC$. Moreover, any subsemigroup of $\CaC$ is finitely generated if and only if there exists an element in the semigroup in each extremal ray of $\CaC$. A good monograph about rational cones and affine monoids is \cite{Bruns}.
From now on, we assume that the integer cones considered in this work are finitely generated.

\begin{definition}
Given an integer cone $\CaC\subset \N^p$, an affine semigroup $S\subset \CaC$ is said to be a $\CaC$-semigroup if $\CaC \setminus S$ is a finite set. If the cone $\CaC=\N^p$, a $\CaC$-semigroup is called $\N^p$-semigroup.
\end{definition}

Fix a finitely generated semigroup $S\subset \N^p$, we denote by $\CaC_S$ the integer cone $L(S)\cap\N^p$. Note that, if $S$ is a $\CaC$-semigroup, the cone $\CaC$ is $\CaC_S$. Obviously, a unique cone corresponds to infinite different semigroups.

The cone $L(S)$ is a polyhedron and we denote by $\{h_1(x)=0,\ldots, h_t(x)=0\}$ the set of its supported hyperplanes. We suppose $L(S)=\{ x\in \R_\ge^d \mid h_1(x)\ge 0, \ldots , h_t(x)\ge 0\}$. Unless otherwise stated, the considered coefficients of each $h_i(x)$ are integers and relatively primes.

Assume $L(S)$ has $q$ extremal rays denoted by $\tau_1,\ldots ,\tau_q$.
Then, each $\tau_i$ is determined by the set of linear equations
$H_i:=\{h_{j_1}(x)=0,\ldots, h_{j_{p-1}}(x)=0\}$ where $J_i:=\{j_1<\cdots <j_{p-1}\}\subset [t]$ is the index set of the supported hyperplanes containing $\tau_i$. So, for each $i\in [q]$, there exists the minimal non-negative integer vector $\mathbf{a}_i$ such that $\tau_i=\{\lambda\mathbf{a}_i\mid \lambda\in \R_\ge \}$. The set $\{\mathbf{a}_1,\ldots , \mathbf{a}_q\}$ is a generating set of $L(S)$.

Note that a necessary condition for $S$ to be a $\CaC_S$-semigroup is the set $\tau_i \cap (\CaC_S\setminus S)$ is finite for all $i\in [q]$.

From each extremal ray $\tau_i$ of $L(S)$, we define $\upsilon_{i}(\alpha)$ as the parallel line to $\tau_i$ given by the solutions of the linear equations $\bigcup _{j\in J_i} \{h_j(x)=\alpha_j\}$ where $\alpha=(\alpha_{j_1},\ldots ,\alpha_{j_{p-1}})\in \Z^{p-1}$. For every integer point $P\in \Z^p$ and $i\in [q]$, there exists $\alpha \in \Z^{p-1}$ such that $P$ belongs to $\upsilon_{i}(\alpha)$; if $P\in \CaC_S$, $\alpha \in \N^{p-1}$. We denotes by $\Upsilon _i(P)$ the element $(h_{j_1}(P),\ldots, h_{j_{p-1}}(P))\in\N^{p-1}$ with $J_i=\{j_1<\cdots <j_{p-1}\}$, $P\in\CaC_S$ and $i\in [q]$. Note that for any $P\in\CaC_S$, $P\in \upsilon_i(\alpha)$ if and only if $\alpha =\Upsilon_i(P)$.

Since all the semigroups appearing in this work are finitely generated, from now on, we omit the term \textit{affine} when affine semigroups are considered.

\section{An algorithm to detect if a semigroup is a $\CaC$-semigroup}\label{sec_main}

In this section, we study the conditions that a semigroup has to satisfy to be a $\CaC$-semigroup. This characterization depends on the minimal set of generators of the given semigroup.

Let $S\subset \N^p$ be the affine semigroup minimally generated by $\Lambda_S=\{\mathbf{s}_1,\ldots, \mathbf{s}_q,\mathbf{s}_{q+1}, \ldots, \mathbf{s}_n\}$ and $\tau_1,\ldots ,\tau_q$ be the extremal rays of $L(S)$. Assume that for every $i\in [q]$, $\tau_i \cap (\CaC_S\setminus S)$ is finite and $\mathbf{s}_i$ is the minimum (respect to the natural order) element in $\Lambda_S$ belonging to $\tau_i$.
We denote by $\mathbf{f}_i$ the maximal element in $\tau_i \cap (\CaC_S\setminus S)$ respect the natural order. Recall that $\mathbf{a}_i$ is the minimal non-negative integer vector defining $\tau_i$, and let $\mathbf{c}_i\in S$ be the element $\mathbf{f}_i+\mathbf{a}_i$. In case $\tau_i \cap (\CaC_S\setminus S)=\emptyset$, we fix $\mathbf{f}_i=-\mathbf{a}_i$. The elements  $\mathbf{f}_i$ and $\mathbf{c}_i$ are a generalization on the semigroup $\tau_i \cap S$ of the concepts Frobenius number and conductor of a numerical semigroup; for numerical semigroups, the Frobenius number is the maximal natural number that is not in the semigroup, and the conductor is Frobenius number plus one (see \cite[Chapter 1]{libro_rosales}).
Hence, we call Frobenius element and conductor of the semigroup $\tau_i\cap S$ to $\mathbf{f}_i$ and $\mathbf{c}_i$, respectively.
One easy but important property of $S$ is for every $P\in S$, $P+\mathbf{c}_i+\lambda\mathbf{a}_i\in S$ for any $i\in [q]$ and $\lambda\in\N$.

Note that $\tau_i\cap \N^p$ is equal to $\{\lambda\mathbf{a}_i\mid \lambda\in \N \}$. So, there exists $S_i\subset \N$ such that $\tau_i\cap S=\{\lambda\mathbf{a}_i\mid \lambda\in S_i \}$. If we assume that $\tau_i \cap (\CaC_S\setminus S)$ is finite, it is easy to prove that $S_i$ is a numerical semigroup.

\begin{lemma}\label{isomorfismo_en_rayo}
The $\tau_i$-semigroup $\tau_i \cap S$ is isomorphic to the numerical semigroup $S_i$.
\end{lemma}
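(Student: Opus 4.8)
The plan is to exhibit an explicit semigroup isomorphism between $\tau_i \cap S$ and $S_i$ and verify it respects the additive structure. By the discussion preceding the lemma, we have $\tau_i \cap \N^p = \{\lambda \mathbf{a}_i \mid \lambda \in \N\}$ and, by definition of $S_i$, $\tau_i \cap S = \{\lambda \mathbf{a}_i \mid \lambda \in S_i\}$. So the natural candidate is the map $\varphi \colon S_i \to \tau_i \cap S$ given by $\varphi(\lambda) = \lambda \mathbf{a}_i$.

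First I would check that $\varphi$ is well defined and surjective: this is immediate from the definition of $S_i$, since every element of $\tau_i \cap S$ is by construction of the form $\lambda \mathbf{a}_i$ with $\lambda \in S_i$, and conversely each such $\lambda \mathbf{a}_i$ lies in $\tau_i \cap S$. Next I would check injectivity: if $\lambda \mathbf{a}_i = \mu \mathbf{a}_i$ with $\lambda, \mu \in \N$, then since $\mathbf{a}_i \neq \mathbf{0}$ (it is a nonzero generator of the extremal ray $\tau_i$) we get $\lambda = \mu$. Finally I would verify that $\varphi$ is a monoid homomorphism: $\varphi(\lambda + \mu) = (\lambda+\mu)\mathbf{a}_i = \lambda \mathbf{a}_i + \mu \mathbf{a}_i = \varphi(\lambda) + \varphi(\mu)$, and $\varphi(0) = \mathbf{0}$. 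One should also note that $S_i$ is closed under addition — this is part of the claim, already remarked before the lemma, that $S_i$ is a numerical semigroup: if $\lambda \mathbf{a}_i, \mu \mathbf{a}_i \in S$ then their sum $(\lambda+\mu)\mathbf{a}_i \in S$ lies on $\tau_i$, so $\lambda + \mu \in S_i$; together with the finiteness hypothesis on $\tau_i \cap (\CaC_S \setminus S)$, which forces $\N \setminus S_i$ to be finite, this gives that $S_i$ is indeed a numerical semigroup.

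There is essentially no hard part here; the statement is a bookkeeping observation that the scalar parametrization $\lambda \mapsto \lambda \mathbf{a}_i$ of the rational ray $\tau_i$ restricts to a bijection between the numerical semigroup $S_i \subseteq \N$ and the sub-semigroup $\tau_i \cap S$ of $S$. The only point requiring a word of care is the minimality of $\mathbf{a}_i$: because $\mathbf{a}_i$ is chosen as the \emph{minimal} non-negative integer vector on $\tau_i$, every integer point of $\tau_i$ is an integer multiple of $\mathbf{a}_i$, which is exactly what makes the parametrization by $\N$ (rather than by a coarser or finer lattice) correct and hence makes $\varphi$ a bijection onto $\tau_i \cap \N^p$ before restricting to $S$.
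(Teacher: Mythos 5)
Your proposal is correct and follows essentially the same route as the paper, which simply exhibits the map $\varphi\colon \tau_i\cap S \to S_i$, $\varphi(\mathbf{w}) = \lambda$ where $\mathbf{w} = \lambda\mathbf{a}_i$ (the inverse of your $\lambda \mapsto \lambda\mathbf{a}_i$), as the isomorphism. Your additional checks of bijectivity, the homomorphism property, and the role of the minimality of $\mathbf{a}_i$ are just the details the paper leaves implicit.
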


\begin{proof}
Consider the isomorphism $\varphi:\tau_i \cap S\to S_i$ with $\varphi(\mathbf{w}):=\lambda$ such that $\mathbf{w}=\lambda\mathbf{a}_i$.
\end{proof}

\begin{corollary}
Given the semigroup $\tau_i \cap S$, $\mathbf{f}_i$ is equal to $f\mathbf{a}_i$ and $\mathbf{c}_i=c\,\mathbf{a}_i$ where $f$ and $c$ are the Frobenius number and the conductor of the numerical semigroup $S_i$, respectively.
\end{corollary}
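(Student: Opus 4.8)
The plan is to transport the statement through the isomorphism $\varphi:\tau_i\cap S\to S_i$ established in Lemma \ref{isomorfismo_en_rayo}. Recall that $\mathbf{f}_i$ is defined as the maximal element of $\tau_i\cap(\CaC_S\setminus S)$ with respect to the natural order, and $\mathbf{c}_i=\mathbf{f}_i+\mathbf{a}_i$; while $f$ and $c$ are by definition the Frobenius number and conductor of $S_i$, so that $c=f+1$. Since $\varphi$ is a bijection sending $\mathbf{w}=\lambda\mathbf{a}_i$ to $\lambda$, the restriction of $\varphi$ (viewed on all of $\tau_i\cap\N^p=\{\lambda\mathbf{a}_i\mid\lambda\in\N\}$) is an order-preserving bijection between $\tau_i\cap\N^p$ and $\N$: indeed $\lambda\mathbf{a}_i\le\mu\mathbf{a}_i$ componentwise if and only if $\lambda\le\mu$, because $\mathbf{a}_i\ne 0$. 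Consequently $\varphi$ carries $\tau_i\cap(\CaC_S\setminus S)$ bijectively onto $\N\setminus S_i$ and respects maxima.

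First I would treat the case $\tau_i\cap(\CaC_S\setminus S)\ne\emptyset$. By the order-preserving correspondence above, the maximum of $\tau_i\cap(\CaC_S\setminus S)$ corresponds to the maximum of $\N\setminus S_i$, which is $f$ by definition of the Frobenius number; hence $\mathbf{f}_i=f\mathbf{a}_i$. Then $\mathbf{c}_i=\mathbf{f}_i+\mathbf{a}_i=f\mathbf{a}_i+\mathbf{a}_i=(f+1)\mathbf{a}_i=c\,\mathbf{a}_i$, using $c=f+1$.

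It remains to check the degenerate case $\tau_i\cap(\CaC_S\setminus S)=\emptyset$. Here $S_i=\N$, so its Frobenius number is $f=-1$ and its conductor is $c=0$. On the other hand the convention fixed in the text is $\mathbf{f}_i=-\mathbf{a}_i=(-1)\mathbf{a}_i=f\mathbf{a}_i$, and then $\mathbf{c}_i=\mathbf{f}_i+\mathbf{a}_i=\mathbf{0}=0\cdot\mathbf{a}_i=c\,\mathbf{a}_i$, so the formulas still hold. This exhausts both cases.

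The only subtlety — and the single point worth stating carefully — is that $\varphi$ is not merely a semigroup isomorphism but an order isomorphism, so that it really does match the maximal gap on the ray with the Frobenius number of $S_i$; everything else is a one-line substitution. I do not anticipate any genuine obstacle beyond making this order-compatibility explicit.
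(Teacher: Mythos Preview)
Your proof is correct and follows exactly the approach implied by the paper: the corollary is stated there immediately after Lemma~\ref{isomorfismo_en_rayo} with no proof at all, so the authors treat it as a direct consequence of the isomorphism $\varphi$. Your argument makes explicit the order-compatibility and the degenerate case $\tau_i\cap(\CaC_S\setminus S)=\emptyset$, which is more detail than the paper gives, but the underlying idea is the same.
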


To test whether $\tau_i \cap (\CaC_S\setminus S)$ is finite, the following result can be used.

\begin{lemma}
Let $S\subset \N^p$ be a semigroup and $\tau$ an extremal ray of $L(S)$ satisfying $\tau\cap \N^p=\{\lambda\mathbf{a}\mid \lambda\in \N \}$ with $\mathbf{a}\in \N^p$. Then, $\tau \cap (\CaC_S\setminus S)$ is finite if and only if $\gcd (\{\lambda\mid \lambda\mathbf{a}\in \tau\cap \Lambda_S \})=1$.
\end{lemma}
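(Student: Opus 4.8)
The plan is to translate the statement into the language of the numerical semigroup $S_i$ living on the ray $\tau$, using the isomorphism from Lemma~\ref{isomorfismo_en_rayo}. Write $\tau\cap\N^p=\{\lambda\mathbf a\mid\lambda\in\N\}$ and $\tau\cap S=\{\lambda\mathbf a\mid\lambda\in S_\tau\}$ where $S_\tau\subset\N$ is the submonoid generated by the set $D:=\{\lambda\in\N\mid \lambda\mathbf a\in\tau\cap\Lambda_S\}$. Then $\tau\cap(\CaC_S\setminus S)=\{\lambda\mathbf a\mid\lambda\in\N\setminus S_\tau\}$ is finite if and only if $\N\setminus S_\tau$ is finite, i.e.\ if and only if $S_\tau$ is a numerical semigroup in the strict sense. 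So the lemma reduces to the well-known fact that a submonoid of $\N$ generated by a finite set $D$ has finite complement in $\N$ if and only if $\gcd(D)=1$.

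First I would prove the forward direction by contrapositive: if $d:=\gcd(D)>1$, then every element of $S_\tau$ is a $\Z$-linear combination (with non-negative coefficients) of elements of $D$, hence divisible by $d$; therefore $S_\tau\subset d\N$, so $\N\setminus S_\tau\supseteq\{n\in\N\mid d\nmid n\}$, which is infinite, and consequently $\tau\cap(\CaC_S\setminus S)$ is infinite. For the converse, suppose $\gcd(D)=1$. By B\'ezout there exist integers (of both signs) $z_\lambda$ with $\sum_{\lambda\in D}z_\lambda\lambda=1$. Splitting into positive and negative parts gives $a,b\in S_\tau$ with $a-b=1$, hence $a$ and $b$ are coprime elements of $S_\tau$. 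Then for every $n\ge(a-1)(b-1)$ one can write $n=xa+yb$ with $x,y\in\N$ (the classical Chicken McNugget / Sylvester–Frobenius argument: among $n,n-a,\dots,n-(b-1)a$ there is one $\equiv 0\pmod b$ and $\ge 0$), so $n\in S_\tau$. Hence $\N\setminus S_\tau$ is finite, and so is $\tau\cap(\CaC_S\setminus S)$.

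The only mild subtlety is the bookkeeping that $\tau\cap(\CaC_S\setminus S)$ is \emph{exactly} the image of $\N\setminus S_\tau$ under $\lambda\mapsto\lambda\mathbf a$: this uses that $\CaC_S\cap\tau=\tau\cap\N^p=\{\lambda\mathbf a\mid\lambda\in\N\}$ (because $\mathbf a$ is the minimal non-negative integer vector on $\tau$) together with $S\cap\tau=\{\lambda\mathbf a\mid\lambda\in S_\tau\}$, and that $D$ really generates $S_\tau$ as a monoid — which holds because $\Lambda_S$ generates $S$ and any generator of $S$ not lying on $\tau$ cannot contribute to a sum landing on $\tau$ (the ray $\tau$ is an extremal face of $L(S)$, so if a sum of generators lies on $\tau$, each summand must lie on $\tau$). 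I expect this faces' argument to be the one point worth stating carefully; everything else is the standard characterization of numerical semigroups recalled in \cite[Chapter 1]{libro_rosales}.
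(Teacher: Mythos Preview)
Your proof is correct and follows essentially the same approach as the paper: both directions reduce to the standard characterization of numerical semigroups via the isomorphism $\lambda\mathbf a\mapsto\lambda$, with the forward direction by contrapositive (if $\gcd>1$ then infinitely many multiples of $\mathbf a$ are missing) and the converse by the classical fact that $\gcd(D)=1$ forces finite complement. You are simply more explicit than the paper in spelling out the B\'ezout/Sylvester--Frobenius argument and the extremal-face bookkeeping (that only generators on $\tau$ can contribute to elements of $S$ on $\tau$), both of which the paper leaves implicit.
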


\begin{proof}
Assume that $\tau \cap (\CaC_S\setminus S)$ is finite and suppose that $\gcd (\{\lambda\mid \lambda\mathbf{a}\in \tau\cap \Lambda_S \})=n\neq 1$. Hence, every element $\lambda\mathbf{a}$ with $\gcd(n,\lambda)=1$ does not belong to $S$, and then $\tau \cap (\CaC_S\setminus S)$ is not finite.

Conversely, by Lemma \ref{isomorfismo_en_rayo}, if $\gcd (\{\lambda\mid \lambda\mathbf{a}\in \tau\cap \Lambda_S \})=1$, $S_i$ is isomorphic to $\tau_i \cap S$. Therefore, $\tau \cap (\CaC_S\setminus S)$ is finite.
\end{proof}

To introduce the announced characterization, we need to define some subsets of $L(S)$ and prove some of their properties.
Associated to the integer cone $\CaC_S$, consider the sets $\CaA:=\{ \sum_{i\in[q]} \lambda_i\mathbf{a}_i \mid 0\le \lambda_i\le 1\}\cap \N^p$ and $\CaD:=\{ \sum_{i\in[q]} \lambda_i\mathbf{s}_i \mid 0\le \lambda_i\le 1\}\cap \N^p$.

\begin{lemma}\label{lemma_diamante}
Given $P\in \CaC_S$, there exist $Q\in \CaA$ and $\beta \in \N^q$ such that $P=Q+\sum_{i\in[q]} \beta_i\mathbf{a}_i$. Moreover, $\Upsilon_j(P)=\Upsilon_j(Q)+\sum_{i\in [q]}\beta_i \Upsilon_j(\mathbf{a_i})$ for every $j\in[q]$.
\end{lemma}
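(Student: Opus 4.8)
The plan is to exhibit the decomposition explicitly by "reducing $P$ modulo the extremal generators $\mathbf{a}_i$". Since $P\in\CaC_S = L(S)\cap\N^p$ and $\{\mathbf{a}_1,\ldots,\mathbf{a}_q\}$ generates the real cone $L(S)$, we may write $P=\sum_{i\in[q]}\mu_i\mathbf{a}_i$ with $\mu_i\in\R_{\ge}$. (If the $\mathbf{a}_i$ are linearly independent this is unique; in general one fixes any such representation, and the argument below does not depend on the choice.) Set $\beta_i:=\lfloor\mu_i\rfloor\in\N$ and $Q:=P-\sum_{i\in[q]}\beta_i\mathbf{a}_i$. Then $Q=\sum_{i\in[q]}(\mu_i-\beta_i)\mathbf{a}_i$ with each $\mu_i-\beta_i\in[0,1)$, so $Q$ lies in the real body $\{\sum_{i\in[q]}\lambda_i\mathbf{a}_i\mid 0\le\lambda_i\le1\}$; and $Q=P-\sum\beta_i\mathbf{a}_i\in\Z^p$, with $Q\in\CaC_S$ because $P\in\CaC_S$ and subtracting non-negative integer multiples of the $\mathbf{a}_i$ keeps us in $L(S)$ (each $\beta_i\le\mu_i$) and in $\N^p$ (the entries of $Q$ are the non-negative reals $\sum(\mu_i-\beta_i)(\mathbf{a}_i)_k$ that are also integers). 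Hence $Q\in\CaA$ and $P=Q+\sum_{i\in[q]}\beta_i\mathbf{a}_i$, as required.

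For the second assertion, recall that for each $j\in[q]$ the map $\Upsilon_j(\,\cdot\,)=(h_{j_1}(\,\cdot\,),\ldots,h_{j_{p-1}}(\,\cdot\,))$ is the coordinatewise application of the linear forms $h_{j_1},\ldots,h_{j_{p-1}}$ defining the facets through $\tau_j$. Each $h_k$ is a linear form (its graph is a supporting hyperplane of $L(S)$ through the origin, so $h_k(0)=0$), hence additive: $h_k\big(Q+\sum_{i}\beta_i\mathbf{a}_i\big)=h_k(Q)+\sum_i\beta_i h_k(\mathbf{a}_i)$. Applying this to the $p-1$ forms indexing $\Upsilon_j$ and reassembling coordinates gives $\Upsilon_j(P)=\Upsilon_j(Q)+\sum_{i\in[q]}\beta_i\Upsilon_j(\mathbf{a}_i)$. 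All terms are in $\N^{p-1}$ since $P,Q,\mathbf{a}_i\in\CaC_S$.

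The only delicate point is checking $Q\in\N^p$, i.e. that flooring the cone-coordinates $\mu_i$ does not push $Q$ out of the positive orthant; this is where one uses that $Q$ is simultaneously a non-negative real combination of the $\mathbf{a}_i$ (so its entries are $\ge 0$ as reals) and an integer vector (so those entries are in $\N$). Everything else is a direct computation with the linearity of the $h_k$, so I do not expect any real obstacle; the statement is essentially a normal-form/"fundamental domain" observation for the cone $L(S)$ with respect to its extremal generators.
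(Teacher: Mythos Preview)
Your proof is correct and follows essentially the same route as the paper: write $P=\sum_i\mu_i\mathbf{a}_i$, set $\beta_i=\lfloor\mu_i\rfloor$ and $Q=P-\sum_i\beta_i\mathbf{a}_i$, and then use linearity of the $h_k$ for the second claim. You are simply more explicit than the paper about why $Q\in\N^p$ and why $\Upsilon_j$ is additive (the paper dispatches both points with ``Trivially'').
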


\begin{proof}
Since $P\in \CaC_S$, $P=\sum_{i\in[q]}\mu_i\mathbf{a}_i$ with $\mu_i\in \Q_\ge$. For each $\mu_i$ there exists $\lambda_i\in [0,1]$ satisfying $\mu_i=\lfloor \mu_i\rfloor +\lambda_i$. Hence, $P=Q +\sum _{i\in[q]}\lfloor \mu_i\rfloor\mathbf{a}_i$ where $Q=\sum_{i\in[q]}\lambda_i\mathbf{a}_i=P-\sum _{i\in[q]}\lfloor \mu_i\rfloor\mathbf{a}_i\in\CaA$. Trivially, $\Upsilon_j(P)$ is equal to $\Upsilon_j(Q)+\sum_{i\in [q]}\beta_i \Upsilon_j(\mathbf{a}_i)$ for every $j\in[q]$.
\end{proof}

For every $i\in [q]$, consider $ T_i\subset\N^{p-1}$ the semigroup generated by the finite set $\{\Upsilon_i(Q)\mid Q\in \CaA\}$ and $\Gamma_i$ its minimal generating set. Note that the sets $\CaA$, $ T_i$ and $\Gamma_i$ only depend on the cone $\CaC_S$, and since $\mathbf{a}_i\in \CaA$, $0\in  T_i$. The relationships between the elements in $\CaC_S$ and $S$, and the elements belonging to $T_i$ and $\Gamma_i$ are explicitly determined in the following results for each $i\in [q]$.

\begin{lemma}
Let $P$ be an element in $\CaC_S$ such that $P\in \upsilon_i(\alpha)$ for some $\alpha\in \N^{p-1}$, then $\alpha\in  T_i$.
\end{lemma}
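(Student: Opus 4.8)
The plan is to reduce the statement directly to Lemma~\ref{lemma_diamante}. First I would translate the hypothesis: since $P\in\CaC_S$ and $P\in\upsilon_i(\alpha)$, the remark recorded just before Lemma~\ref{lemma_diamante} gives $\alpha=\Upsilon_i(P)$. So it suffices to prove that $\Upsilon_i(P)\in T_i$.

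Next, apply Lemma~\ref{lemma_diamante} to $P$: there exist $Q\in\CaA$ and $\beta\in\N^q$ with $P=Q+\sum_{k\in[q]}\beta_k\mathbf{a}_k$, and, taking the instance $j=i$ of the displayed identity in that lemma, $\Upsilon_i(P)=\Upsilon_i(Q)+\sum_{k\in[q]}\beta_k\,\Upsilon_i(\mathbf{a}_k)$. (Here I am careful to keep the fixed ray index $i$ separate from the summation index $k$ that plays the role of the dummy index in the lemma.)

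Then I would observe that every vector appearing in this expression is a generator of $T_i$. Indeed $Q\in\CaA$ by construction, and each $\mathbf{a}_k$ also lies in $\CaA$: it is the point obtained by choosing $\lambda_k=1$ and all other coefficients equal to $0$ in the definition of $\CaA$, which is a non-negative integer vector. Hence $\Upsilon_i(Q)$ and all $\Upsilon_i(\mathbf{a}_k)$ belong to the finite set $\{\Upsilon_i(Q')\mid Q'\in\CaA\}$ generating $T_i$. Since the coefficients $\beta_k$ are non-negative integers, $\Upsilon_i(P)$ is a non-negative integer combination of generators of $T_i$, so $\Upsilon_i(P)\in T_i$, which is exactly $\alpha\in T_i$. (As a sanity check one may note $\Upsilon_i(\mathbf{a}_i)=0$, since $\mathbf{a}_i\in\tau_i$ lies on every hyperplane $h_j=0$ with $j\in J_i$; this is consistent with $0\in T_i$, though it is not needed here.)

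The argument is essentially immediate, so there is no genuine technical obstacle; the only points that require a little care are matching indices between Lemma~\ref{lemma_diamante} and the fixed index $i$, and explicitly recording that each $\mathbf{a}_k$ itself belongs to $\CaA$ so that its $\Upsilon_i$-image is a bona fide generator of $T_i$.
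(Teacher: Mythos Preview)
Your proof is correct and follows essentially the same route as the paper: translate $P\in\upsilon_i(\alpha)$ into $\alpha=\Upsilon_i(P)$, apply Lemma~\ref{lemma_diamante} to decompose $P$ as $Q+\sum_k\beta_k\mathbf{a}_k$ with $Q,\mathbf{a}_1,\dots,\mathbf{a}_q\in\CaA$, and conclude that $\Upsilon_i(P)$ is an $\N$-combination of generators of $T_i$. Your presentation is slightly more explicit about why each $\mathbf{a}_k$ lies in $\CaA$, but the argument is the same.
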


\begin{proof}
By definition, $P\in \upsilon_i(\alpha)$ means that $\alpha =\Upsilon_i(P)$. Using Lemma \ref{lemma_diamante}, $P=Q+\sum_{j\in[q]} \beta_j\mathbf{a}_j$ with $Q,\mathbf{a}_1,\ldots ,\mathbf{a}_q\in \CaA$ and $\beta_1,\ldots ,\beta_q\in \N$. Therefore, $\Upsilon_i (P)=\Upsilon_i(Q)+\sum_{j\in [q]}\beta_j \Upsilon_i(\mathbf{a}_j)\in  T_i$.
\end{proof}

\begin{corollary}
For every $\alpha \in  T_i$, $\CaC_S\cap \upsilon_i(\alpha)\neq \emptyset$ if and only if $\CaC_S\cap \upsilon_i(\beta)\neq \emptyset$ for all $\beta \in \Gamma _i$.
\end{corollary}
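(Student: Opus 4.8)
The plan is to reduce the statement about the parallel lines $\upsilon_i(\alpha)$ intersecting $\CaC_S$ to a statement purely about membership in the semigroup $T_i\subset\N^{p-1}$, and then exploit that $\Gamma_i$ generates $T_i$. The key observation I would isolate first is a ``descent and lift'' characterization: for $\alpha\in T_i$, the line $\upsilon_i(\alpha)$ meets $\CaC_S$ if and only if there is some $Q\in\CaA$ and $\beta\in\N^q$ with $\alpha=\Upsilon_i(Q)+\sum_{j\in[q]}\beta_j\Upsilon_i(\mathbf a_j)$ such that the corresponding point $Q+\sum_{j\in[q]}\beta_j\mathbf a_j$ actually lies in $\CaC_S$; but by Lemma~\ref{lemma_diamante} \emph{every} point of $\CaC_S$ on $\upsilon_i(\alpha)$ arises this way, and conversely any $Q\in\CaA$ and $\beta\in\N^q$ produce a genuine point of $\CaC_S$ since $\CaA\subset\CaC_S$ and $\CaC_S$ is closed under addition of the $\mathbf a_j$. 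Hence $\CaC_S\cap\upsilon_i(\alpha)\neq\emptyset$ is \emph{equivalent} to $\alpha\in T_i$, for $\alpha\in\N^{p-1}$.

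Granting that equivalence, the corollary becomes almost immediate. For the forward direction, assume $\CaC_S\cap\upsilon_i(\alpha)\neq\emptyset$ for some (equivalently, every) $\alpha\in T_i$; since each $\beta\in\Gamma_i$ lies in $T_i$, the equivalence gives $\CaC_S\cap\upsilon_i(\beta)\neq\emptyset$ for all $\beta\in\Gamma_i$. For the converse, suppose $\CaC_S\cap\upsilon_i(\beta)\neq\emptyset$ for all $\beta\in\Gamma_i$, and fix an arbitrary $\alpha\in T_i$. Write $\alpha=\sum_{\gamma\in\Gamma_i}n_\gamma\gamma$ with $n_\gamma\in\N$. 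Using the hypothesis, for each $\gamma$ pick $P_\gamma\in\CaC_S$ with $\Upsilon_i(P_\gamma)=\gamma$; then I claim $P:=\sum_{\gamma\in\Gamma_i}n_\gamma P_\gamma\in\CaC_S$ (a finite non-negative integer combination of elements of $\CaC_S$, hence in $\CaC_S$ because $\CaC_S$ is a semigroup), and $\Upsilon_i(P)=\sum_\gamma n_\gamma\Upsilon_i(P_\gamma)=\sum_\gamma n_\gamma\gamma=\alpha$ by linearity of each coordinate $h_{j_k}$ of $\Upsilon_i$. Therefore $P\in\CaC_S\cap\upsilon_i(\alpha)$, so this intersection is nonempty; as $\alpha\in T_i$ was arbitrary, we are done.

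The only subtle point — and the step I would be most careful about — is the linearity used twice: that $\Upsilon_i(\cdot)$ is additive, i.e. $\Upsilon_i(P+P')=\Upsilon_i(P)+\Upsilon_i(P')$. This holds because $\Upsilon_i(P)=(h_{j_1}(P),\dots,h_{j_{p-1}}(P))$ and each $h_{j_k}$ is a linear form with zero constant term (the defining forms of the cone $L(S)$), so additivity is genuine and not merely affine. This is exactly what makes ``$\alpha$ lies in the semigroup $T_i$ generated by $\{\Upsilon_i(Q)\}$'' interchangeable with ``there is an integer point of $\CaC_S$ whose $i$-th profile is $\alpha$''. Once that is in hand the argument is routine bookkeeping; no finiteness or Frobenius-type input is needed here, only that $\CaC_S$ is an additively closed set containing $\CaA$ and the rays $\mathbf a_j$.
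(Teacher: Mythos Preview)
Your proof is correct and follows essentially the same route as the paper's: both prove the nontrivial (converse) direction by writing $\alpha=\sum n_\gamma\gamma$ over $\Gamma_i$, choosing for each $\gamma$ a preimage in $\CaC_S$ with $\Upsilon_i$-value $\gamma$, and using additivity of $\Upsilon_i$ to conclude that the sum lies in $\CaC_S\cap\upsilon_i(\alpha)$. Your additional observation---that $\CaC_S\cap\upsilon_i(\alpha)\neq\emptyset$ is in fact equivalent to $\alpha\in T_i$, so both sides of the biconditional are always true---is a valid sharpening that the paper does not make explicit.
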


\begin{proof}
Since $\Gamma _i\subset  T_i$, if for all $\alpha \in  T_i$, $\CaC_S\cap \upsilon_i(\alpha)\neq \emptyset$ then $\CaC_S\cap \upsilon_i(\beta)\neq \emptyset$ for all $\beta \in \Gamma _i$.

Assume that $\CaC_S\cap \upsilon_i(\beta)\neq \emptyset$ for all $\beta \in \Gamma _i$ and let $\alpha$ be an element in $ T_i$. Then, there exist $\beta_1,\ldots ,\beta_k\in \Gamma_i$, $\mu_1,\ldots ,\mu_k\in \N$ and $Q_1,\ldots ,Q_k\in \CaD$ such that $\alpha=\sum_{j\in [k]}\mu_j\beta_j$ and $\Upsilon_i(Q_j)=\beta _j$ for $j\in [k]$. Note that $P=\sum_{j\in [k]}\mu_jQ_j\in\CaC_S$ belongs to $\upsilon_i(\alpha)$.
\end{proof}

\begin{corollary}\label{minimal_terminos_independientes}
For every $\alpha \in  T_i$, $S\cap \upsilon_i(\alpha)\neq \emptyset$ if and only if $S\cap \upsilon_i(\beta)\neq \emptyset$ for all $\beta \in \Gamma _i$.
\end{corollary}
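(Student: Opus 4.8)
The plan is to follow the proof of the preceding corollary almost verbatim, replacing $\CaC_S$ by $S$; the point is that $S$ is itself a semigroup and hence closed under non-negative integer combinations of its elements. The forward implication is immediate: since $\Gamma_i\subseteq T_i$ (because $\Gamma_i$ is the minimal generating set of $T_i$), if $S\cap\upsilon_i(\alpha)\neq\emptyset$ for every $\alpha\in T_i$, then in particular this holds for every $\beta\in\Gamma_i$.

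For the converse, suppose $S\cap\upsilon_i(\beta)\neq\emptyset$ for all $\beta\in\Gamma_i$ and fix $\alpha\in T_i$. Since $\Gamma_i$ generates $T_i$, there exist $\beta_1,\dots,\beta_k\in\Gamma_i$ and $\mu_1,\dots,\mu_k\in\N$ with $\alpha=\sum_{j\in[k]}\mu_j\beta_j$. For each $j\in[k]$ choose $P_j\in S\cap\upsilon_i(\beta_j)$; recalling that a point $P\in\CaC_S$ lies on $\upsilon_i(\gamma)$ precisely when $\gamma=\Upsilon_i(P)$, this means $\Upsilon_i(P_j)=\beta_j$. Set $P:=\sum_{j\in[k]}\mu_jP_j$. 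Then $P\in S$ because $S$ is a semigroup, and since the entries of $\Upsilon_i$ are the linear forms $h_{j_1},\dots,h_{j_{p-1}}$, we obtain $\Upsilon_i(P)=\sum_{j\in[k]}\mu_j\Upsilon_i(P_j)=\sum_{j\in[k]}\mu_j\beta_j=\alpha$. Hence $P\in S\cap\upsilon_i(\alpha)$, so this set is nonempty.

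There is no genuinely hard step here; the only points to keep in mind are that $\Upsilon_i$ is additive on $\CaC_S$ (which holds because the supporting hyperplanes of the cone $L(S)$ pass through the origin, so each $h_j$ is a homogeneous linear form), and that a non-negative integer combination of elements of $S$ again belongs to $S$. In fact this argument is slightly shorter than the one for the $\CaC_S$-version, since the semigroup property of $S$ makes it unnecessary to pass through the set $\CaD$.
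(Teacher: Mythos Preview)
Your proof is correct and follows essentially the same approach the paper intends: the paper states this corollary without proof immediately after the $\CaC_S$-version, implicitly relying on the same argument with $\CaC_S$ replaced by $S$. Your observation that one may pick the witnesses $P_j$ directly in $S$ (rather than in $\CaD$) and use the additivity of $\Upsilon_i$ together with closure of $S$ under addition is exactly the expected adaptation.
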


Note that if $P\in S\cap \upsilon_i(\alpha)$ for some $\alpha \in\N^{p-1}$ and $i\in[q]$, then $P+\mathbf{c}_i+\lambda\mathbf{a}_i\in S$ and $\Upsilon_i(P+\mathbf{c}_i+\lambda\mathbf{a}_i)=\alpha$ for all $\lambda\in \N$.

Now, we introduce a characterization of $\CaC$-semigroups. This characterization depends on the minimal generating set of the given semigroup. Besides, from its proof, we provide an algorithm for checking if a semigroup is a $\CaC$-semigroup (Algorithm \ref{algoritmo_check_Csemig}). Note that most of the parts of Algorithm \ref{algoritmo_check_Csemig} can be parallelized at least in $q$ stand-alone processes.

\begin{theorem}\label{main_theorem}
A semigroup $S$  minimally generated by $\Lambda_S=\{\mathbf{s}_1,\ldots ,\mathbf{s}_n\}$ is a $\CaC_S$-semigroup if and only if:
\begin{enumerate}
\item $\tau_i \cap (\CaC_S\setminus S)$ is finite for all $i\in [q]$.
\item $\Lambda_S\cap \upsilon_i(\alpha)\neq \emptyset$ for all $\alpha \in \Gamma _i$ and $i\in [q]$.

\end{enumerate}
\end{theorem}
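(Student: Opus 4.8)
The plan is to prove both implications by exploiting the ``conductor'' structure along the extremal rays, which reduces the infinitely many points of $\CaC_S$ to finitely many ``base'' configurations lying in the compact region $\CaA$ (resp. $\CaD$). For the forward direction, assume $S$ is a $\CaC_S$-semigroup, i.e. $\CaC_S\setminus S$ is finite. Condition (1) is immediate since $\tau_i\cap(\CaC_S\setminus S)\subset \CaC_S\setminus S$ is a subset of a finite set. For condition (2), fix $i\in[q]$ and $\alpha\in\Gamma_i$. By the corollary preceding Corollary \ref{minimal_terminos_independientes}, there is some $Q\in\CaD$ with $\Upsilon_i(Q)=\alpha$, so the line $\upsilon_i(\alpha)$ meets $\CaC_S$; since the intersection $\upsilon_i(\alpha)\cap\CaC_S$ is a copy of $\tau_i\cap\N^p$ shifted, it contains infinitely many points $Q+\lambda\mathbf a_i$, all but finitely many of which must lie in $S$ (again because $\CaC_S\setminus S$ is finite). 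Hence $S\cap\upsilon_i(\alpha)\neq\emptyset$, and by Corollary \ref{minimal_terminos_independientes} together with the fact that $S$ is generated by $\Lambda_S$, one deduces $\Lambda_S\cap\upsilon_i(\alpha)\neq\emptyset$ — here one must check that a point of $S$ on the line forces a \emph{generator} on that line, which follows because any element of $S$ decomposes into generators and the $\Upsilon_i$-values add, so some generator must already have the correct ``$\alpha$-coordinate'' modulo the directions $\mathbf a_j$.

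For the converse, assume (1) and (2) and show $\CaC_S\setminus S$ is finite. The key step is: given $P\in\CaC_S$, use Lemma \ref{lemma_diamante} to write $P=Q+\sum_{i\in[q]}\beta_i\mathbf a_i$ with $Q\in\CaA$ and $\beta\in\N^q$, and observe $\Upsilon_j(P)=\Upsilon_j(Q)+\sum_i\beta_i\Upsilon_j(\mathbf a_i)\in T_j$ for each $j$. By condition (2) and Corollary \ref{minimal_terminos_independientes}, $S\cap\upsilon_j(\Upsilon_j(P))\neq\emptyset$ for every $j$; pick such a witness $w_j\in S$. The claim is that if all coordinates $\beta_i$ are large enough — specifically $\beta_i$ exceeding some threshold depending only on $\CaC_S$ and the conductors $\mathbf c_i$ — then $P\in S$. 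This is where condition (1) enters: along each ray the numerical semigroup $S_i$ has a finite conductor $c_i$ (Corollary after Lemma \ref{isomorfismo_en_rayo}), and the property that $w+\mathbf c_i+\lambda\mathbf a_i\in S$ for any $w\in S$ lets us ``push'' a witness point far along each ray while staying in $S$; combining the pushes in all $q$ ray-directions (an induction on $q$, adding one direction at a time) reaches $P$ provided each $\beta_i$ is at least the relevant conductor plus a bounded correction term. Since $\CaA$ is finite and only finitely many $P\in\CaC_S$ fail to have all $\beta_i$ above the threshold, $\CaC_S\setminus S$ is finite.

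I expect the main obstacle to be the converse's ``pushing'' argument: one must simultaneously control all $q$ extremal-ray directions, and a naive induction adding directions one at a time needs the intermediate points to remain witnesses with the \emph{same} $\Upsilon_j$-values in the not-yet-processed coordinates. Concretely, after translating a point by $\mathbf c_i+\lambda\mathbf a_i$ to fix its membership along $\tau_i$, its $\Upsilon_j$-value for $j\neq i$ changes by $\Upsilon_j(\mathbf c_i+\lambda\mathbf a_i)$, so one has to bookkeep these cross-terms and show the required thresholds stay finite and depend only on $\CaC_S$ and the finitely many conductors — not on $P$. Making this uniformity explicit (e.g. by bounding everything in terms of $\max_i\|\mathbf c_i\|_1$ and the finite generating data of the $T_j$) is the technical heart; everything else is either a direct finiteness observation or an application of the corollaries already established.
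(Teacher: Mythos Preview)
Your forward direction is the paper's argument, but the crucial step is left vague. From a point of $S$ on $\upsilon_i(\alpha)$ you get $\alpha=\sum_j\mu_j\Upsilon_i(\mathbf s_j)$ with each $\Upsilon_i(\mathbf s_j)\in T_i$; what forces some $\Upsilon_i(\mathbf s_j)$ to equal $\alpha$ is precisely that $\alpha\in\Gamma_i$ is an \emph{atom} of $T_i$, so this sum cannot be nontrivial. Your phrase ``the correct $\alpha$-coordinate modulo the directions $\mathbf a_j$'' does not express this, and without invoking minimality the implication fails.

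Your converse has a genuine gap in its last sentence. The claim ``only finitely many $P\in\CaC_S$ fail to have all $\beta_i$ above the threshold'' is false whenever $q\ge 2$: the points $P=Q+\sum_i\beta_i\mathbf a_i$ with $Q\in\CaA$, $\beta_1$ below threshold, and $\beta_2,\dots,\beta_q$ arbitrary already form an infinite set. Your pushing argument establishes $P\in S$ only when \emph{every} $\beta_i$ is large, so the infinitely many points in each such boundary slab remain uncontrolled, and the induction on directions you propose does not obviously repair this (those slabs are not cones to which the hypotheses re-apply). The paper avoids the difficulty by switching from $\CaA$ to $\CaD=\{\sum_i\lambda_i\mathbf s_i:0\le\lambda_i\le1\}\cap\N^p$: since each $\mathbf s_i\in S$, every $P\in\CaC_S$ decomposes as $Q+\sum_j\mu_j\mathbf s_j$ with $Q\in\CaD$ and the second sum already in $S$. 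For each ray $i$ \emph{separately} one then chooses $\mathbf x_i\in\tau_i\cap S$ with $||\mathbf x_i||_1$ large enough that $\mathbf x_i+Q\in S$ for all $Q$ in the finite set $\CaD$ (using the minimal witnesses on the lines $\upsilon_i(\Upsilon_i(Q))$ together with the conductor $\mathbf c_i$); this yields $\mathbf x_i+\CaC_S\subset S$ with no cross-ray bookkeeping, and hence $\CaC_S\setminus S\subset\{\sum_i\lambda_i\mathbf x_i:0\le\lambda_i\le1\}\cap\N^p$, which is finite.
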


\begin{proof}
Let $S$ be a $\CaC_S$-semigroup. Trivially, $\tau_i \cap (\CaC_S\setminus S)$ is finite for all $i\in [q]$. Assume that $\Lambda_S\cap \upsilon_i(\alpha)= \emptyset$ for some $\alpha \in \Gamma _i$ and some $i\in [q]$. Since $\alpha \in \Gamma _i$, there exists $Q\in \CaA$ such that $\alpha =\Upsilon _i(Q)$. Besides, $Q+\lambda\mathbf{a}_i\in \CaC_S$ and $\Upsilon _i(Q+\lambda\mathbf{a}_i)=\alpha$ for all $\lambda \in \N$. For some $\lambda\in\N$, $Q+\lambda\mathbf{a}_i$ has to be in $S$ ($S$ is $\CaC_S$-semigroup), that is to say, $Q+\lambda\mathbf{a}_i=\sum_{j\in [n]} \mu_j \mathbf{s}_j$ with $\mu_1,\ldots ,\mu_n\in \N$. Therefore, $\alpha=\Upsilon_i (Q+\lambda\mathbf{a}_i)= \sum_{j\in [n]} \mu_j \Upsilon_i(\mathbf{s}_j)$.
By Lemma \ref{lemma_diamante}, for all $j\in [n]$, $\mathbf{s}_j=Q_j+\sum_{k\in [q]} \beta_{jk}\mathbf{a}_k$ for some $Q_j\in \CaA$ and $\beta_{j1},\ldots ,\beta_{jq}\in \N$. So, $\alpha= \sum_{j\in [n]} \mu_j \Upsilon_i(Q_j+\sum_{k\in [q]} \beta_{jk}\mathbf{a}_k)= \sum_{j\in [n]} \mu_j \Upsilon_i(Q_j) +\sum_{j\in [n]} \sum_{k\in [q]} \mu_j  \beta_{jk}\Upsilon_i( \mathbf{a}_k)$. Since $\alpha$ is a minimal element in $ T_i$, $\sum_{j\in [n]} \mu_j +\sum_{j\in [n]} \sum_{k\in [q]\setminus \{i\}} \mu_j  \beta_{jk}=1$. Hence, there exists $\mathbf{s}\in\Lambda_S$ such that $\Upsilon_i(\mathbf{s})=\alpha$ and then $\Lambda_S\cap \upsilon_i(\alpha)\neq \emptyset$.

Conversely, we assume that $\forall i\in [q]$ and $\forall \alpha \in \Gamma _i$, $\tau_i \cap (\CaC_S\setminus S)$ is finite and $\Lambda_S\cap \upsilon_i(\alpha)\neq \emptyset$ (recall that $\mathbf{c}_i=\mathbf{f}_i+\mathbf{a}_i$).
The second condition implies that for $\beta=\Upsilon_i(Q)$ with $Q\in \CaD$, each line $\upsilon_{i}(\beta)$ included an unique non zero minimum (respect 1-norm) point belonging to $S$. Denote by $\{\mathbf{m}_{i1},\ldots , \mathbf{m}_{id_i}\}$ the set obtained from the union of above points for the different elements in $\CaD$ (some of these elements belong to $\Lambda_S$). Note that $\mathbf{m}_{ij}+\mathbf{c}_i+\lambda \mathbf{a}_i\in S$ for all $j\in [d_i]$ and $\lambda\in \N$. Consider $n_i:=\max \{||\mathbf{m}_{i1}+\mathbf{c}_i||_1,\ldots , ||\mathbf{m}_{id_i}+\mathbf{c}_i ||_1\}$, and $\mathbf{x}_i$ the minimum (respect to the 1-norm) element in $\tau_i\cap S$ such that $||\mathbf{x}_i||_1$ is greater than or equal to $n_i$. The set $\CaD_i:=\CaD+\mathbf{x}_i$ satisfies that $\CaD_i \cap S=\CaD_i \cap \CaC_S=\CaD_i$, so $\mathbf{x}_i+\CaC_S\subset S$. We define the finite set $\CaX:= \{\sum_{i\in [q]}\lambda _i\mathbf{x}_i \mid 0\le \lambda_i\le 1 \} $. Since $\mathbf{x}_i+\CaC_S\subset S$ for every $i\in [q]$, $\CaC_S\setminus S\subset \CaX$. Therefore, $S$ is a $\CaC_S$-semigroup.
\end{proof}

\begin{algorithm}[h]\label{algoritmo_check_Csemig}
	\KwIn{The minimal generating set $\Lambda_S$ of a semigroup $S\subset \N^p$.}
	\KwOut{Check if $S$ is a $\CaC_S$-semigroup.}

\Begin{
    $q\leftarrow $ number of extremal rays of $L(S)$\;
    \If{$\tau_i \cap (\CaC_S\setminus S)$ is not finite for some $i\in [q]$}
        {
        \Return $S$ is not a $\CaC_S$-semigroup.
        }

	Compute the set $\{\mathbf{a}_1,\ldots ,\mathbf{a}_q\}$ from $L(S)$\;
	$\CaA\leftarrow \{ \sum_{i\in[q]} \lambda_i\mathbf{a}_i \mid 0\le \lambda_i\le 1\}\cap \N^p$\;
    \ForAll{$i\in [q]$}
        {$\Gamma_i \leftarrow $ the minimal generating set of $ T_i$ obtained from the finite set $\Upsilon _i(\CaA)$\;
        }

    \If{$\Lambda_S\cap \upsilon_i(\alpha)\neq \emptyset$ for all $\alpha \in \Gamma _i$ and $i\in [q]$}
        {
        \Return $S$ is a $\CaC_S$-semigroup.
        }
	\Return $S$ is not a $\CaC_S$-semigroup.
}
\caption{Test if a semigroup $S$ is a $\CaC_S$-semigroup.}
\end{algorithm}

Example \ref{initial_example} illustrates Theorem \ref{main_theorem} and Algorithm \ref{algoritmo_check_Csemig}.

\begin{example}\label{initial_example}
Let $S\subset \N^3$ be the semigroup minimally generated by
$$
\begin{multlined}
\Lambda_S=\{
(2, 0, 0), (4, 2, 4), (0, 1, 0), (3, 0, 0), (6, 3, 6), (3, 1, 1), (4,
1, 1),\\ (3, 1, 2), (1, 1, 0), (3, 2, 3), (1, 2, 1)
 \}.
\end{multlined}
$$
The cone $L(S)$ is $\langle (1,0,0),(2,1,2),(0,1,0) \rangle_{\R_\ge}$ and its supported hyperplanes are $h_1(x,y,z)\equiv 2y-z =0$, $h_2(x,y,z)\equiv x-z =0$ and $h_3(x,y,z)\equiv z=0$.
Recall $\CaC_S=L(S)\cap \N^3$. By $\mathbf{a}_1$, $\mathbf{a}_2$ and $\mathbf{a}_3$ we denote the vectors $(1,0,0)$, $(2,1,2)$ and $(0,1,0)$ respectively, and $\tau_1$, $\tau_2$ and $\tau_3$ are the extremal rays with sets of defining equations $\{h_1(x,y,z)=0,h_3(x,y,z)=0\}$, $\{h_1(x,y,z)=0,h_2(x,y,z)=0\}$ and $\{h_2(x,y,z)=0,h_3(x,y,z)=0\}$, respectively. Hence, $S_1=(\tau_1\setminus \{(1,0,0)\})\cap \N^3$, $S_2=\tau_2\setminus\{(2,1,2)\}\cap \N^3$ and $S_3=\tau_3\cap \N^3$, and the first condition in Theorem \ref{main_theorem} holds.

The set $\CaA$ is equal to
\begin{equation}\label{set_A}
\begin{multlined}
\{(0, 0, 0), (0, 1, 0), (1, 0, 0), (1, 1, 0), (1, 1, 1), (2, 1, 1), (2,1, 2),\\
(2, 2, 2), (3, 1, 2), (3, 2, 2)\},
\end{multlined}
\end{equation}
and
$$\Upsilon_1(\CaA)=\{(0, 0), (0, 2), (1, 1), (2, 0), (2, 2)\},$$
$$\Upsilon_2(\CaA)=\{(0, 0), (0, 1), (1, 0), (1, 1), (2, 0), (2, 1)\},$$
$$\Upsilon_3(\CaA)=\{(0, 0), (0, 1), (0, 2), (1, 0), (1, 1), (1, 2) \}.$$
Therefore, $\Gamma_1=\{(0,2),(1,1),(2,0)\}$ and $\Gamma_2=\Gamma_3=\{(0,1),(1,0)\}$.

Since
$\Upsilon_1(\{(3,1,1),(3,1,2),(1,1,0)\})=\Gamma_1$, $\Upsilon_2(\{(3,1,2),(3,2,3)\})=\Gamma_2$, and $\Upsilon_3(\{(1,1,0),(1,2,1)\})=\Gamma_3$, $S$ satisfies the second condition in Theorem \ref{main_theorem}. Hence, $S$ is a $\CaC_S$-semigroup.

By using our implementation of Algorithm \ref{algoritmo_check_Csemig}, we can confirm that $S$ is a $\CaC_S$-semigroup,

\begin{verbatim}
In [1]: IsCsemigroup([[2,0,0],[4,2,4],[0,1,0],[3,0,0],[6,3,6],
            [3,1,1],[4,1,1],[3,1,2],[1,1,0],[3,2,3],[1,2,1]])
Out[1]: True
\end{verbatim}

\end{example}

To finish this section, it should be pointed out that there exist some special cases of semigroups where Theorem \ref{main_theorem} can be simplified: $\N^p$-semigroups and two-dimensional case.

Note that, if the integer cone $\CaC_S$ is $\N^p$, its supported hyperplanes are $\{x_1=0,\ldots ,x_p=0\}$. Moreover, since its extremal rays are the axes, $\tau_i\equiv \{\lambda \mathbf{e}_i\mid \lambda \in \Q_\ge \}$ is determined by the equations $\cup _{j\in[p]\setminus\{i\}}\{x_{j}=0\}$, and for any canonical generator $\mathbf{e}$ of $\N^{p-1}$, there exists $P$ in $\N^p$ such that $\Upsilon_i(P)=\mathbf{e}$. Furthermore, $\cup _{j\in [p]\setminus \{i\}} \{\Upsilon_i(\mathbf{e}_j)\}$ is the canonical basis of $\N^{p-1}$. Hence, $\Gamma_1=\cdots =\Gamma_p$ is the canonical basis of $\N^{p-1}$. From previous considerations, a characterization of $\N^p$-semigroups equivalent to \cite[Theorem 2.8]{CFU} is obtained from Theorem \ref{main_theorem}.

\begin{corollary}
A semigroup $S$  minimally generated by $\Lambda_S$ is an $\N^p$-semigroup if and only if:
\begin{enumerate}
\item for all $i\in [p]$, the non null entries of the elements in $\tau_i \cap \Lambda_S$ are coprime, or $\mathbf{s}_i=\mathbf{e}_i$.
\item for all $i,j\in [p]$ with $i\neq j$, $\mathbf{e}_i+\lambda_j\mathbf{e}_j\in \Lambda_S$ for some $\lambda_j\in \N$.
\end{enumerate}
\end{corollary}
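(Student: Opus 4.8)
The plan is to derive this corollary as a direct specialization of Theorem \ref{main_theorem} to the case $\CaC_S = \N^p$, carefully translating the two abstract conditions into the concrete combinatorial statements about $\Lambda_S$. First I would record the geometric facts already observed in the surrounding text: when $\CaC_S = \N^p$, the supported hyperplanes are $\{x_1 = 0, \ldots, x_p = 0\}$, the extremal rays $\tau_i$ are the coordinate axes with $\mathbf{a}_i = \mathbf{e}_i$, and for each $i \in [p]$ the minimal generating set $\Gamma_i$ of $T_i$ equals the canonical basis $\{\mathbf{e}_j' : j \in [p-1]\}$ of $\N^{p-1}$, where I identify $\N^{p-1}$ with the coordinates indexed by $[p]\setminus\{i\}$. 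This last point follows because $\CaA = \{0,1\}^p$ here, and applying $\Upsilon_i$ (which just forgets the $i$-th coordinate) to the standard basis vectors $\mathbf{e}_j$ with $j \neq i$ already produces all of the canonical basis of $\N^{p-1}$, while no other element of $\CaA$ can be a new minimal generator.

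Next I would translate condition (1) of Theorem \ref{main_theorem}, namely that $\tau_i \cap (\CaC_S \setminus S)$ is finite for all $i$. By the Lemma preceding Theorem \ref{main_theorem} (the one characterizing finiteness of $\tau \cap (\CaC_S\setminus S)$ via a gcd), this holds if and only if $\gcd(\{\lambda : \lambda \mathbf{e}_i \in \tau_i \cap \Lambda_S\}) = 1$. Since $\tau_i \cap \N^p = \{\lambda \mathbf{e}_i : \lambda \in \N\}$, an element of $\tau_i \cap \Lambda_S$ is exactly a generator supported only on the $i$-th coordinate, and "$\lambda$" is precisely its unique non-null entry. So the gcd condition becomes: the non-null entries of the elements of $\tau_i \cap \Lambda_S$ are coprime. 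The degenerate case $\gcd(\cdots) = 1$ with the set being a singleton $\{1\}$ corresponds to $\mathbf{s}_i = \mathbf{e}_i$, which is why the statement phrases it as the disjunction "$\ldots$ are coprime, or $\mathbf{s}_i = \mathbf{e}_i$"; I would remark that the "or" clause is really just the special subcase where coprimality is witnessed trivially, so strictly the first alternative already suffices, but spelling out $\mathbf{s}_i = \mathbf{e}_i$ matches the formulation in \cite[Theorem 2.8]{CFU}.

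Then I would translate condition (2): $\Lambda_S \cap \upsilon_i(\alpha) \neq \emptyset$ for all $\alpha \in \Gamma_i$ and all $i \in [q] = [p]$. Fix $i$ and take $\alpha = \mathbf{e}_j'$ a canonical basis vector of $\N^{p-1}$, corresponding to the index $j \in [p] \setminus \{i\}$. By definition $\upsilon_i(\alpha)$ is the line of solutions of $\bigcup_{k \in [p]\setminus\{i\}} \{x_k = \alpha_k\}$, which here forces $x_k = 0$ for $k \neq i, j$ and $x_j = 1$, leaving $x_i$ free; thus $\upsilon_i(\mathbf{e}_j') = \{\lambda \mathbf{e}_i + \mathbf{e}_j : \lambda \in \Z\}$. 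Hence $\Lambda_S \cap \upsilon_i(\mathbf{e}_j') \neq \emptyset$ means exactly that $\mathbf{e}_j + \lambda_j \mathbf{e}_i \in \Lambda_S$ for some $\lambda_j \in \N$ — equivalently, after swapping the roles of the dummy indices $i$ and $j$, that for all $i \neq j$ there is $\lambda_j$ with $\mathbf{e}_i + \lambda_j \mathbf{e}_j \in \Lambda_S$, which is condition (2) of the corollary. Putting the two translations together gives the equivalence. I do not anticipate a genuine obstacle here; the only point requiring a little care is the identification of $\N^{p-1}$ with a coordinate subspace of $\N^p$ (so that $\Upsilon_i$, the lines $\upsilon_i(\alpha)$, and the generating set $\Gamma_i$ all get their indices lined up consistently), and making sure the boundary case of a one-element generator on an axis is handled so the statement reads as in the literature.
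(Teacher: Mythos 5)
Your proposal is correct and follows essentially the same route as the paper: the paper obtains this corollary by exactly the specialization you describe (observing that for $\CaC_S=\N^p$ the supported hyperplanes are the coordinate hyperplanes, $\mathbf{a}_i=\mathbf{e}_i$, and each $\Gamma_i$ is the canonical basis of $\N^{p-1}$, then reading off the two conditions of Theorem \ref{main_theorem} via the gcd lemma and the lines $\upsilon_i(\alpha)$). Your extra remarks on the redundancy of the clause ``$\mathbf{s}_i=\mathbf{e}_i$'' and on the index bookkeeping are consistent with the paper's treatment.
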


Focus on two dimensional case, note that the extremal rays and the supported hyperplanes of a cone are equal. Since for each extremal ray the coefficients of its defining linear equation are relatively primes, the linear equations $h_1(x,y)=1$ and $h_2(x,y)=1$ always have non-negative integer solutions. So, any semigroup $S\subset \N^2$ is a $\CaC_S$-semigroup if and only if $\tau_i \cap (\CaC_S\setminus S)$ is finite for $i=1,2$, and both sets $\Lambda_S\cap \{h_1(x,y)=1\}$ and $\Lambda_S\cap \{h_2(x,y)=1\}$ are non empty.

\section{Set of gaps of $\CaC$-semigroups}\label{sec_gaps}

This section gives an algorithm to compute the set of gaps of a $\CaC$-semigroup. This algorithm is obtained from Theorem \ref{main_theorem}. To introduce such an algorithm, let us start by redefining some objects used to prove that theorem.

Given $S$ a $\CaC_S$-semigroup with $q$ extremal rays, for any $i\in [q]$, let $\mathbf{c}_i$ be the conductor of the semigroup $\tau_i\cap S$.
By Corollary \ref{minimal_terminos_independientes}, for any
$\alpha\in \Upsilon_i(\CaD)$ the intersection $\upsilon_i(\alpha)\cap S$ is not empty. Hence, set $\mathbf{m}^{(i)}_\alpha$ the element in $\upsilon_i(\alpha)\cap S$ with minimal 1-norm and $\alpha\in \Upsilon_i(\CaD)\setminus\{0\}$. Note that $\mathbf{m}^{(i)}_\alpha + \mathbf{c}_i+\lambda \mathbf{a}_i\in S$ for all $\lambda\in \N$. Let $n_i:=||\mathbf{c}_i ||_1+ \max \big(\{||\mathbf{m}^{(i)}_\alpha||_1\mid \alpha\in \Upsilon_i(\CaD) \setminus\{0\} \}\big)$,
and $\mathbf{x}_i$ the minimal element in $\tau_i\cap S$ such that $||\mathbf{x}_i||_1$ is greater than or equal to $n_i$. The vector $\mathbf{x}_i$ can be computed as follows: let $Q$ be the non-negative rational solution of the systems of linear equations $\{x_1+\cdots +x_p=n_i, h_{j_1}(x)=0,\ldots, h_{j_{p-1}}(x)=0\}$ (recall that $h_{j_1}(x)=0,\ldots, h_{j_{p-1}}(x)=0$ are the equations defining $\tau_i$), then $\mathbf{x}_i= \Big\lceil \frac{||Q||_1}{||\mathbf{a}_i||_1}\Big\rceil \mathbf{a}_i$.

By the proof of Theorem \ref{main_theorem}, $\CaC_S\setminus S\subset \CaX$, with $\CaX= \{\sum_{i\in [q]}\lambda _i\mathbf{x}_i \mid 0\le \lambda_i\le 1 \}$. Algorithm \ref{computing_gaps_Csemig} shows the process to computed the set of gaps of $S$. Note that several of its steps can be computed in a parallel way.
\begin{algorithm}[h]\label{computing_gaps_Csemig}
	\KwIn{The minimal generating set $\Lambda_S$ of a $\CaC$-semigroup $S\subset \N^p$.}
	\KwOut{Set of gaps of $S$.}

\Begin{
    $\CaH\leftarrow\emptyset$\;
    $q\leftarrow $ number of extremal rays of $L(S)$\;
    \ForAll{$i\in [q]$}
        {
        $\mathbf{c}_i\leftarrow$ conductor of $\tau_i\cap S$\;
        }

    $\CaD\leftarrow \{ \sum_{i\in[q]} \lambda_i\mathbf{s}_i \mid 0\le \lambda_i\le 1\}\cap \N^p$\;

    \ForAll{$i\in [q]$}
        {
        $\Upsilon=\{\alpha_1,\ldots ,\alpha_j\}\leftarrow \Upsilon_i(\CaD)\setminus\{0\}$\;

        \ForAll{$h\in [j]$}
            {
            $\mathbf{m}_h\leftarrow$ the element in $\upsilon_i(\alpha_h)\cap S$ with minimal 1-norm\;
            }

        $n\leftarrow ||\mathbf{c}_i ||_1+ \max \big(\{||\mathbf{m}_1||_1,\ldots ,||\mathbf{m}_j||_1\}\big)$\;

        $\mathbf{x}_i\leftarrow$ minimal element in $\tau_i\cap S$ with $n\le ||\mathbf{x}_i||_1$\;
        }

    $\CaX\leftarrow\{\sum_{i\in [q]}\lambda _i\mathbf{x}_i \mid 0\le \lambda_i\le 1 \} \cap \N^p$\;

    \While{$\CaX\ne \emptyset$}
        {$Q\leftarrow \text{First}( \CaX)$\;

        \If{$Q\notin S$}
            {
            $\CaH\leftarrow \CaH\cup \{Q\}$
            }

        $\CaX\leftarrow \CaX\setminus \{Q\}$\;
        }

	\Return $\CaH$ set of gaps of $S$.
}
\caption{Computing the set of gaps of a $\CaC$-semigroup.}
\end{algorithm}

We illustrate Algorithm \ref{computing_gaps_Csemig} in the following example. Besides, we confirm our handmade computations by using our free software \cite{PROGRAMA}.

\begin{example}\label{ejemploHuecos}
Consider the $\CaC_S$-semigroup $S$ defined in example \ref{initial_example}. So, $\mathbf{s}_1=\mathbf{c}_1= (2,0,0)$, $\mathbf{s}_2=\mathbf{c}_2= (4,2,4)$,  $\mathbf{s}_3= (0,1,0)$ and $\mathbf{c}_3= (0,0,0)$. The set $\CaD$ is
$$
\begin{multlined}
\{
(0, 0, 0), (0, 1, 0), (1, 0, 0), (1, 1, 0), (1, 1, 1), (2, 0, 0), (2,
1, 0), (2, 1, 1),\\ (2, 1, 2), (2, 2, 2), (3, 1, 1), (3, 1, 2), (3, 2,
2), (3, 2, 3), (4, 1, 2), (4, 2, 2),\\ (4, 2, 3), (4, 2, 4), (4, 3, 4),
(5, 2, 3), (5, 2, 4), (5, 3, 4), (6, 2, 4), (6, 3, 4)
\}.
\end{multlined}
$$

For example, for the extremal ray $\tau_1$, $\Upsilon_1(\CaD)$ is the set
$$
\begin{multlined}
\{(0, 0), (0, 2), (0, 4), (1, 1), (1, 3), (2, 0), (2, 2), (2, 4)\},
\end{multlined}
$$
and $\cup_{\alpha\in \Upsilon_1(\CaD) \setminus\{0\}} \{\mathbf{m}^{(1)}_\alpha\}$ is
$$
\begin{multlined}
\{ (0, 1, 0), (3, 1, 1), (3, 1, 2), (3, 2, 2), (3, 2, 3), (4, 2, 4), (4, 3, 4) \}
\end{multlined}$$
For $\tau_2$ and $\tau_3$,
$$
\begin{multlined}
\cup_{\alpha\in \Upsilon_2(\CaD)\setminus\{0\}} \{\mathbf{m}^{(2)}_\alpha\}=
\{(0, 1, 0), (3, 1, 2), (1, 1, 0), (3, 2, 3), (2, 0, 0),\\ (2, 1, 0), (6, 3, 5), (3, 1, 1)\}
\end{multlined}$$
$$\begin{multlined}\cup_{\alpha\in \Upsilon_3(\CaD)\setminus\{0\}} \{\mathbf{m}^{(3)}_\alpha\}=
\{ (1, 1, 0), (1, 2, 1), (2, 0, 0), (2, 3, 1), (2, 4, 2),\\ (3, 1, 1), (3, 1, 2), (3, 2, 3), (4, 2, 2), (4, 3, 3), (4, 2, 4), (5, 3, 4), (6, 2, 4) \}
\end{multlined}$$
Then $n_1=13$, $n_2=24$ and $n_3=12$, and $\mathbf{x}_1= (14,0,0)$, $\mathbf{x}_2= (10,5,10)$ and $\mathbf{x}_3= (0,13,0)$. Therefore, the set of gaps of $S$ is,
$$
\begin{multlined}
\{ (1,0,0), (1,1,1), (2,1,1), (2,1,2), (2,2,1), (2,2,2), (2,3,2),\\ (4,1,2), (4,2,3), (5,2,4), (5,3,5), (8,4,7) \}.
\end{multlined}$$

By using our implementation of Algorithm \ref{computing_gaps_Csemig}, we obtain those gaps,

\begin{verbatim}
In [1]: ComputeGaps([[2,0,0],[4,2,4],[0,1,0],[3,0,0],[6,3,6],
        [3,1,1],[4,1,1],[3,1,2],[1,1,0],[3,2,3],[1,2,1]])
Out[1]: [[1,0,0], [1,1,1], [2,1,1], [2,1,2], [2,2,1], [2,2,2],
        [2,3,2], [4,1,2], [4,2,3], [5,2,4], [5,3,5], [8,4,7]]
\end{verbatim}

\end{example}

\section{Embedding dimension of $\CaC$-semigroups}\label{sec_embedding_dimension}

In \cite{Csemigroup}, it is proved that the embedding dimension of an $\N^p$-semigroup is greater than or equal to $2p$, and this bound holds. Furthermore, a conjecture about a lower bound of embedding dimension of any $\CaC$-semigroup is proposed. In this section, we determine a lower bound of the embedding dimension of a given $\CaC$-semigroup by studying its elements belonging to $\CaA$.

As in previous sections, let $\CaC\subset \N^p$ be a finitely generated cone and $\tau_1,\ldots ,\tau_q$ its extremal rays. For any $i\in [q]$, $\mathbf{a}_i$ is the generator of $\tau_i\cap \N^p$, $\CaA$ is the finite set $\{ \sum_{i\in[q]} \lambda_i\mathbf{a}_i \mid 0\le \lambda_i\le 1\}\cap \N^p$ and $\Gamma_i=\{\alpha^{(i)}_1,\ldots ,\alpha^{(i)}_{m_i} \}$ denotes the minimal generating set of the semigroup $T_i\subset \N^{p-1}$ generated by $\Upsilon_i(\CaA)$. Given a $\CaC$-semigroup $S$, consider $\Lambda':=\{\mathbf{s}_{t_1},\ldots ,\mathbf{s}_{t_k}\}$ the set of minimal elements of $S$ in $\CaA$, and $M_l:=\{i\in [q] \mid \Upsilon_{i}(\mathbf{s}_{t_l})\in\Gamma_i\cup \{0\} \}$ for $l\in [k]$.

The following result provides a lower bound for the embedding dimension of any $\CaC$-semigroup such that $\Lambda'$ is the set of its minimal elements in $\CaA$.

\begin{proposition}\label{proposition_dim}
Given $S\subset \N^p$ a $\CaC$-semigroup,
$$\e(S)\ge \sum _{i\in [q]} (\e(S_i)+\e(T_i))+k-\sum_{i\in [k]}\sharp(M_i).$$
\end{proposition}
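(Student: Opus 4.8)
The plan is to produce, from the minimal generating set $\Lambda_S$ of a $\CaC$-semigroup $S$, a collection of \emph{pairwise distinct} generators that is large enough to force the stated bound, being careful about which generators can be counted twice. First I would recall that a minimal generator $\mathbf{s}$ of $S$ cannot be written as a sum of two nonzero elements of $S$; in particular, for each extremal ray $\tau_i$, the minimal generators of $S$ lying on $\tau_i$ project under the isomorphism $\varphi$ of Lemma \ref{isomorfismo_en_rayo} exactly onto the minimal generators of the numerical semigroup $S_i$, contributing $\sum_{i\in[q]}\e(S_i)$ distinct generators (one subtlety: a generator on $\tau_i$ could \emph{a priori} also lie on $\tau_j$, but since distinct extremal rays meet only at the origin, these sets are disjoint across $i$). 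Second, for each $i$, by Corollary \ref{minimal_terminos_independientes} and the argument in the proof of Theorem \ref{main_theorem}, for every $\alpha\in\Gamma_i$ there is a minimal generator $\mathbf{s}$ with $\Upsilon_i(\mathbf{s})=\alpha$ and such a generator is \emph{not} on $\tau_i$ (its $\Upsilon_i$-value is a nonzero element of $\Gamma_i$); the set of such generators, for fixed $i$, has cardinality at least $\e(T_i)=\sharp\Gamma_i$ because distinct $\alpha\in\Gamma_i$ force distinct generators. Third, the set $\Lambda'=\{\mathbf{s}_{t_1},\ldots,\mathbf{s}_{t_k}\}$ of minimal generators lying in $\CaA$ contributes $k$ generators.

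The core of the argument is then an \emph{inclusion–exclusion / overcounting} bookkeeping: I would form the three families
$$
\mathcal{F}_0:=\bigcup_{i\in[q]}\{\text{min.\ gens.\ on }\tau_i\},\qquad
\mathcal{F}_i:=\{\text{chosen gens.\ with }\Upsilon_i\text{-value in }\Gamma_i\}\ (i\in[q]),\qquad
\Lambda',
$$
and count $\sum_i\e(S_i)+\sum_i\e(T_i)+k$ with multiplicity, then subtract the unavoidable repetitions. A generator is counted more than once precisely when it lies in several of these families; I claim the only forced coincidences come from $\Lambda'$: an element $\mathbf{s}_{t_l}\in\Lambda'\cap\CaA$ has $\Upsilon_i(\mathbf{s}_{t_l})\in\N^{p-1}$ for all $i$, and whenever $\Upsilon_i(\mathbf{s}_{t_l})\in\Gamma_i\cup\{0\}$ — i.e.\ $i\in M_l$ — this generator can be reused as the chosen representative in family $\mathcal{F}_i$ (case $\Upsilon_i(\mathbf{s}_{t_l})\in\Gamma_i$) or it lies on $\tau_i$ hence in $\mathcal{F}_0$ (case $\Upsilon_i(\mathbf{s}_{t_l})=0$, since then $\mathbf{s}_{t_l}=\lambda\mathbf{a}_i$). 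Either way $\mathbf{s}_{t_l}$ is double-counted exactly $\sharp(M_l)$ times beyond its single appearance in $\Lambda'$. Conversely, a minimal generator \emph{not} in $\CaA$ that is reused between two of the families $\mathcal{F}_0,\mathcal{F}_i$ can be avoided by choosing the representatives greedily — this is where one must argue that the representative in $\mathcal{F}_i$ can always be taken distinct from those already placed in $\mathcal{F}_j$ for $j\ne i$ and from $\mathcal{F}_0$, unless it is in $\CaA$. Hence the number of distinct generators is at least $\sum_i(\e(S_i)+\e(T_i))+k-\sum_{l\in[k]}\sharp(M_l)$, and since this is a lower bound on $\sharp\Lambda_S=\e(S)$, the proposition follows.

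The step I expect to be the main obstacle is making the overcounting correction \emph{tight and rigorous}: one must show that $\sum_{l}\sharp(M_l)$ is an \emph{upper} bound on the number of coincidences that can be forced, not merely that it accounts for some of them. Concretely, I would need to verify that for a minimal generator $\mathbf{s}\notin\CaA$ one cannot simultaneously have $\Upsilon_i(\mathbf{s})\in\Gamma_i$ for two different indices $i$ in a way that collapses two of the $\mathcal{F}_i$'s onto the same element without a compensating gain — i.e.\ that the greedy choice of representatives really works. The cleanest way to do this is to choose, for each $i$ and each $\alpha\in\Gamma_i$, the representative $\mathbf{m}^{(i)}_\alpha$ of minimal $1$-norm in $\upsilon_i(\alpha)\cap S$ (as in Section \ref{sec_gaps}), observe that if such a representative happens to lie on some ray $\tau_j$ or to have $\Upsilon_j$-value in $\Gamma_j$ then it already lies in $\CaA$ (here one uses that $\mathbf{a}_i\in\CaA$ and the structure of $\CaA$ as the "fundamental parallelepiped" intersected with $\N^p$), so that it gets absorbed into $\Lambda'$ and is correctly discounted by the $M_l$ term. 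Once that absorption lemma is in place, the count is an honest lower bound and the inequality is immediate.
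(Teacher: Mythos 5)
Your proposal is correct in outline and follows essentially the same route as the paper's proof: count the generators forced on the extremal rays ($\sum_{i\in[q]}\e(S_i)$), those forced on the lines $\upsilon_i(\alpha)$ with $\alpha\in\Gamma_i$ ($\sum_{i\in[q]}\e(T_i)$), and the $k$ generators lying in $\CaA$, and then subtract the multiple roles that elements of $\Lambda'$ can play, which is exactly the term $\sum_{l\in[k]}\sharp(M_l)$. The absorption claim you single out as the main obstacle is indeed the crux (the paper leaves it implicit), and it is true: if $P\in\CaC_S\setminus\CaA$, then every representation $P=\sum_{l\in[q]}\mu_l\mathbf{a}_l$ has some $\mu_l\ge 1$, and subtracting the corresponding $\mathbf{a}_l$ and using that the elements of $\Gamma_i$ are minimal in $T_i$ (together with the extremality of the rays) shows that $\Upsilon_i(P)\in\Gamma_i\cup\{0\}$ can hold for at most one index $i$, so forced coincidences among your families occur only at elements of $\Lambda'$ and are counted by the $M_l$'s.
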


\begin{proof}
From Theorem \ref{main_theorem}, for any $i\in [q]$, there exist $\e(S_i)$ minimal generators of $S$ in $\tau_i$. Moreover, for each element $\gamma\in \Gamma_i$, there is at least an element of $\Lambda_S$ in $\upsilon_i(\gamma)$. But, it is possible that one element in $\Lambda_S\cap \CaA$ belongs to two (or more) different lines $\upsilon_i(\gamma)$ and $\upsilon_j(\gamma')$ with $\gamma\in\Gamma_i\cup \{0\} $ and $\gamma'\in\Gamma_j\cup \{0\} $ (in that case, $\upsilon_i(\gamma)\cap \upsilon_j(\gamma')$ is this minimal generator). Since each one of these points in $\Lambda_S\cap \CaA$ can be the only minimal generator of $S$ in those lines, $\sharp(M_l)=n>1$ means that one minimal generator can be the only minimal generator for $n$ different elements in $\cup _{i\in[q]}\Gamma_i\cup \{0\} $. So, counting the minimal amount of elements needed to have almost one minimal generator in each line $\upsilon_i(\gamma)$ for each $\gamma\in\Gamma_i\cup \{0\} $ and $i\in [q]$, we have that the embedding dimension of $S$ is greater than or equal to $\sum _{i\in [q]} (\e(S_i)+\e(T_i))+k-\sum_{i\in [k]}\sharp(M_i).$
\end{proof}

\begin{example}
Consider the $\CaC_S$-semigroup $S$ given in example \ref{initial_example}. In that case, $\Lambda'=\{(3,1,2),(0,1,0),(1,1,0)\}$, $\sharp(M_1)=2$ (i.e. $\Upsilon_i(3,1,2)\in \Gamma_i$ for $i=1,2$), $\sharp(M_2)=2$ ($\Upsilon_1 (0,1,0) \in \Gamma_1$ and $\Upsilon_3 (0,1,0)=(0,0,0)$), and $\sharp(M_3)=2$ ($\Upsilon_1 (1,1,0) \in \Gamma_1$ and $\Upsilon_3 (1,1,0) \in \Gamma_3$). So, $\sum _{i\in [q]} (\e(S_i)+\e(T_i))+k-\sum_{i\in [k]}\sharp(M_i)= 5 + 7 + 3 - 2-2-2= 9$
that is smaller than $\e(S)=11$.
\end{example}

Given any bound, the first interesting question about it is if the bound is reached for some $\CaC$-semigroup. The answer is affirmative, and this fact is formulated as follows.

\begin{lemma}
Let $S_1,\ldots ,S_q$ be the non proper numerical semigroups minimally generated by $\{n_1^{(i)},\ldots ,n_{\e(S_i)}^{(i)}\}$ for each $i\in [q]$, and $\Lambda''\subset \CaC\setminus\{\mathbf{a}_1,\ldots ,\mathbf{a}_q\}$ be a set satisfying
\begin{itemize}
\item  for every $\gamma\in\Gamma_i$ and $i\in [q]$, there exists an unique $\mathbf{d}\in \Lambda''$ such that $\Upsilon_i(\mathbf{d})=\gamma$,
\item if there exist $i,j\in [q]$ and $\mathbf{d},\mathbf{d}'\in \Lambda''$ such that $\Upsilon_i(\mathbf{d})=\Upsilon_j(\mathbf{d}')$, then $\mathbf{d}=\mathbf{d}'$.
\end{itemize}
Then, the embedding dimension of the $\CaC$-semigroup generated by $$\Lambda'' \cup \bigcup_{i\in [q]} \{n_1^{(i)}\mathbf{a}_i,\ldots ,n_{\e(S_i)}^{(i)}\mathbf{a}_i\}$$ is
$$\sum _{i\in [q]} (\e(S_i)+\e(T_i))+k-\sum_{i\in [k]}\sharp(M_i).$$
\end{lemma}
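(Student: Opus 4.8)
The plan is to exhibit the semigroup $S$ generated by $\Lambda:=\Lambda''\cup\bigcup_{i\in[q]}\{n_1^{(i)}\mathbf{a}_i,\ldots,n_{\e(S_i)}^{(i)}\mathbf{a}_i\}$ and to show two things: first, that $S$ is indeed a $\CaC$-semigroup and $\Lambda$ is its minimal generating set, and second, that with this $S$ the inequality of Proposition \ref{proposition_dim} becomes an equality. For the first part I would invoke Theorem \ref{main_theorem}. Condition (1) holds because $\tau_i\cap S$ is, via the isomorphism of Lemma \ref{isomorfismo_en_rayo}, the numerical semigroup $S_i$, which is non-proper; hence $\tau_i\cap(\CaC_S\setminus S)$ is finite. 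Condition (2) holds by the first bullet in the hypothesis on $\Lambda''$: for every $\gamma\in\Gamma_i$ there is $\mathbf{d}\in\Lambda''\subset\Lambda$ with $\Upsilon_i(\mathbf{d})=\gamma$, so $\Lambda_S\cap\upsilon_i(\gamma)\neq\emptyset$. One must also check $L(S)=\CaC$, which follows since the $\mathbf{a}_i$ (up to the scalars $n_j^{(i)}$) generate the extremal rays and $\Lambda''\subset\CaC$.

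Next I would argue that $\Lambda$ is exactly the minimal generating set, i.e. no element of $\Lambda$ is a non-negative integer combination of the others. The elements $n_j^{(i)}\mathbf{a}_i$ lying on $\tau_i$ are minimal there precisely because $\{n_1^{(i)},\ldots,n_{\e(S_i)}^{(i)}\}$ is the minimal generating set of $S_i$ and a ray element can only be obtained from other ray elements on the same ray. For $\mathbf{d}\in\Lambda''$: the assumption $\Lambda''\subset\CaC\setminus\{\mathbf{a}_1,\ldots,\mathbf{a}_q\}$ together with minimality considerations should force each $\mathbf{d}$ to lie in $\CaA$ (since each $\gamma\in\Gamma_i$ is a minimal generator of $T_i$, an argument mirroring the first half of the proof of Theorem \ref{main_theorem} shows the corresponding $\mathbf{d}$ cannot decompose), and being in $\CaA\setminus\{0\}$ it is a minimal element of $S$. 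The second bullet guarantees distinct $\gamma$'s that are ``served'' by the same $\mathbf{d}$ are served by exactly that $\mathbf{d}$ and no coincidental collisions inflate or deflate the count.

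Now for the embedding dimension count. The minimal generators split into two groups: the ray generators, contributing $\sum_{i\in[q]}\e(S_i)$, and the generators in $\Lambda''$, which form the set $\Lambda'$ of minimal elements of $S$ in $\CaA$ (together with the $\mathbf{a}_i$'s if any lie in $S$, but here they do not unless $1\in S_i$). I would set $k:=\sharp\Lambda''$ and observe that $\bigcup_{i\in[q]}(\Gamma_i\cup\{0\})$ has $\sum_{i\in[q]}(\e(T_i)+1)$ elements counted with multiplicity over the index $i$; since $0\in T_i$ for every $i$ and each $\mathbf{a}_j\in\CaA$ realises $\Upsilon_i(\mathbf{a}_j)\in\Gamma_i\cup\{0\}$, the $\mathbf{x}_i$ argument from Theorem \ref{main_theorem}'s proof is not needed here — what matters is the double-counting bookkeeping: each $\mathbf{d}\in\Lambda''$ covers $\sharp(M_l)$ of these slots where $l$ is its index, and by the second bullet no slot is covered twice. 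Hence the number of generators in $\Lambda''$ equals $\sum_{i\in[q]}\e(T_i)+q$ (the $+q$ from the zero slots, each needing the element $\mathbf{a}_i$ — here one must be careful: $0\in\Gamma_i\cup\{0\}$ is served by $\mathbf{a}_i$ which is a ray generator, so this is already counted) minus the overcounting $\sum_{l\in[k]}(\sharp(M_l)-1)=\sum_{l}\sharp(M_l)-k$. Combining gives $\e(S)=\sum_{i\in[q]}\e(S_i)+\sum_{i\in[q]}\e(T_i)+k-\sum_{l\in[k]}\sharp(M_l)$, which is the asserted value and coincides with the lower bound of Proposition \ref{proposition_dim}, so equality holds.

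The main obstacle I anticipate is the bookkeeping with the zero element and the $\mathbf{a}_i$'s: one must pin down exactly whether $\Upsilon_i(\mathbf{a}_j)=0$ or $\Upsilon_i(\mathbf{a}_j)\in\Gamma_i$ for $i\neq j$, whether any $\mathbf{a}_i$ itself belongs to $S$ (it does iff $1\in S_i$, i.e. $S_i=\N$, which is excluded since we want the $n_j^{(i)}$ to be a genuine minimal set with $\e(S_i)\ge 1$ but $S_i\neq\N$ — or if $S_i=\N$ then $\mathbf{a}_i$ is itself a ray generator and the count still works out), and making sure the definition of $M_l$ via $\Gamma_i\cup\{0\}$ is consistent with which $\mathbf{d}$ realises the zero slot. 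Getting this indexing exactly right so that the overcount is precisely $\sum_l\sharp(M_l)-k$ — no more, no less — is the delicate point; the rest is a direct application of Theorem \ref{main_theorem} and Lemma \ref{isomorfismo_en_rayo}.
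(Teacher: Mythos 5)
Your overall plan --- check with Theorem \ref{main_theorem} that the generated semigroup is a $\CaC$-semigroup, verify that the given set is its minimal generating system, and then count generators on and off the extremal rays --- is the same counting argument as the paper's (whose proof is essentially a one-sentence version of your third paragraph), and your verification of condition (1), of the minimality of the ray generators via Lemma \ref{isomorfismo_en_rayo}, and of the non-decomposability of the elements of $\Lambda''$ (minimality of $\gamma$ in $T_i$ plus the uniqueness bullet) is sound. The genuine flaw is in your bookkeeping: the claim that the hypotheses force every $\mathbf{d}\in\Lambda''$ to lie in $\CaA$, and the resulting identification $k=\sharp\Lambda''$, are both false. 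The hypothesis only puts $\Lambda''$ inside $\CaC\setminus\{\mathbf{a}_1,\ldots,\mathbf{a}_q\}$; a point of $\upsilon_i(\gamma)$ with $\gamma\in\Gamma_i$ stays on that line after translation by $\mathbf{a}_i$, so it need not belong to $\CaA$, and indeed the corollary following this lemma and the last example of Section \ref{sec_embedding_dimension} take $\Lambda''\subset\CaC\setminus\CaA$, where $k=0\neq\sharp\Lambda''$. In the statement, $k$ and the sets $M_l$ are defined through $\Lambda'$, the minimal generators of the resulting semigroup that lie in $\CaA$, not through $\Lambda''$, so your derivation as written establishes a formula with a different meaning of $k$ and $M_l$ than the one asserted.

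What saves the count, and what is missing from your argument, is the following fact: if $P\in\CaC$ satisfies $\Upsilon_i(P)\in\Gamma_i$ and $\Upsilon_j(P)\in\Gamma_j$ for two distinct rays $i\neq j$, then $P\in\CaA$. (Sketch: write $P=Q+\sum_{l\in[q]}\beta_l\mathbf{a}_l$ as in Lemma \ref{lemma_diamante}; since $\Upsilon_i(\mathbf{a}_l)\neq 0$ for $l\neq i$ and the elements of $\Gamma_i$ are minimal in $T_i$, the expansion of $\Upsilon_i(P)$ can contain only one nonzero summand, and repeating this for $j$ forces all possibilities to land in $\CaA$.) Granting this, every $\mathbf{d}\in\Lambda''\setminus\CaA$ realises exactly one pair $(i,\gamma)$ with $\gamma\in\Gamma_i$, hence contributes $1$ to the number of off-ray generators and is matched by exactly one summand of $\sum_{i\in[q]}\e(T_i)$, while contributing nothing to $k-\sum_{l\in[k]}\sharp(M_l)$; only the elements of $\Lambda''\cap\CaA=\Lambda'$ can cover several slots, and for them $\sharp(M_l)-1$ is precisely the overcount (note also that no element of $\Lambda''$ lies on a ray, so the zero slots you worry about never occur and are accounted for by the $\mathbf{a}_i$'s not belonging to $S$, since the $S_i$ are non-proper). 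With this correction your count coincides with the stated formula; without it, the passage from your $k:=\sharp\Lambda''$ to the $k$ of the statement is unjustified.
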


\begin{proof}
By the hypothesis, there are exactly $\sum _{i\in [q]} \e(T_i)+k-\sum_{i\in [k]}\sharp(M_i)$ minimal generators in the $\CaC$-semigroup generated by $$\Lambda'' \cup \bigcup_{i\in [q]} \{n_1^{(i)}\mathbf{a}_i,\ldots ,n_{\e(S_i)}^{(i)}\mathbf{a}_i\}$$ outside its extremal rays, and $\sum_{i\in [q]} \e(S_i)$ belonging to its extremal rays.

\end{proof}

\begin{example}
Let $S\subset \N^3$ be the semigroup minimally generated by
$$
\begin{multlined}
\Lambda_S=\{
(2, 0, 0), (4, 2, 4), (0, 2, 0), (3, 0, 0), (6, 3, 6), (0, 3, 0), (3,
1, 1),\\ (3, 1, 2), (1, 1, 0), (3, 2, 3), (1, 2, 1)
 \}.
\end{multlined}
$$
Note that the cone $\CaC_S$ is the same as the cone in example \ref{initial_example}. So, $\CaA$, $\Gamma_1$, $\Gamma_2$ and $\Gamma_3$ are the sets given in that example. For the semigroup $S$,
$\Upsilon_1(\{(3,1,1),(3,1,2),(1,1,0)\})=\Gamma_1$, $\Upsilon_2(\{(3,1,2),(3,2,3)\})=\Gamma_2$ and $\Upsilon_3(\{(1,1,0),(1,2,1)\})=\Gamma_3$. Since $(1,1,0),(3,1,2)\in \CaA$, $\e(S)= 11= 6+7+2-2-2 = \sum _{i\in [3]} (\e(S_i)+\e(T_i))+2-\sum_{i\in[2]}\sharp(M_i)$.
\end{example}

Fix a cone $\CaC$, studying the different possibilities to select sets of points $K\subset \CaC$ such that $\cup_{i\in[q]}\Gamma_i$ is the union of the minimal generating set of the semigroup given by $\cup_{Q\in K}\Upsilon_i(Q)$ (for $i$ from 1 to $q$), we can state results like the following:

\begin{corollary}
Let $S_1,\ldots ,S_q$ be the non proper numerical semigroups minimally generated by $\{n_1^{(i)},\ldots ,n_{\e(S_i)}^{(i)}\}$ for each $i\in [q]$, and $\Lambda''\subset \CaC\setminus \CaA$ be a set satisfying that for every $\gamma\in\Gamma_i$ and $i\in [q]$, there exists an unique $\mathbf{d}\in \Lambda'' $ such that $\Upsilon_i(\mathbf{d})=\gamma$.
Then, the embedding dimension of the $\CaC$-semigroup generated by $\Lambda'' \cup \bigcup_{i\in [q]} \{n_1^{(i)}\mathbf{a}_i,\ldots ,n_{\e(S_i)}^{(i)}\mathbf{a}_i\}$ is
$\sum_{i\in [q]} (\e(S_i) + \e(T_i))$.
\end{corollary}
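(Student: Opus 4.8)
The plan is to derive this corollary directly from the preceding lemma by checking that its hypotheses are a special case. Recall that in the lemma, $\Lambda''$ was required to satisfy two conditions: (i) for every $\gamma\in\Gamma_i$ and $i\in[q]$ there is a unique $\mathbf{d}\in\Lambda''$ with $\Upsilon_i(\mathbf{d})=\gamma$, and (ii) if $\Upsilon_i(\mathbf{d})=\Upsilon_j(\mathbf{d}')$ for some $i,j\in[q]$ and $\mathbf{d},\mathbf{d}'\in\Lambda''$, then $\mathbf{d}=\mathbf{d}'$. In the corollary we instead take $\Lambda''\subset\CaC\setminus\CaA$ and only impose (i). So the first step is to observe that, because $\Lambda''$ avoids $\CaA$ and the relevant $\mathbf{d}$'s realizing each $\gamma$ are forced outside the "diamond" region, the situation is the one where no minimal generator outside the rays can do double duty: each line $\upsilon_i(\gamma)$ gets its own distinct generator.

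First I would make precise why condition (ii) holds automatically here, or more directly, why $\sum_{i\in[k]}\sharp(M_i)$ and the $k$ correction term vanish. The set $\Lambda'$ of minimal elements of $S$ lying in $\CaA$ consists, by construction of the generating set $\Lambda''\cup\bigcup_{i}\{n^{(i)}_j\mathbf{a}_i\}$, only of the $\mathbf{a}_i$ themselves (the ray generators $n^{(i)}_j\mathbf{a}_i$ reduce to multiples of $\mathbf{a}_i$, and $\mathbf{a}_i\in\CaA$; the elements of $\Lambda''$ are outside $\CaA$). Since for each $i$, $\Upsilon_i(\mathbf{a}_i)=0$, one computes $M_l$ for the element $\mathbf{a}_i$: we need to count in how many coordinates $\Upsilon_j(\mathbf{a}_i)\in\Gamma_j\cup\{0\}$. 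The point is that with $\Lambda''$ supplying, for each $\gamma\in\Gamma_i$, a \emph{genuine} extra generator outside $\CaA$ and outside all rays, the bound in Proposition \ref{proposition_dim} becomes exactly $\sum_{i\in[q]}(\e(S_i)+\e(T_i))$ once the $k-\sum_{i\in[k]}\sharp(M_i)$ contribution cancels. Concretely I would argue that the minimal generators split cleanly into two groups with no overlap: $\sum_{i\in[q]}\e(S_i)$ generators on the extremal rays (the images of the $n^{(i)}_j$ under $\mathbf{a}_i$, which by Lemma \ref{isomorfismo_en_rayo} are exactly the minimal generators of $\tau_i\cap S$), and $\sum_{i\in[q]}\e(T_i)$ generators coming from $\Lambda''$, one per element of each $\Gamma_i$, all distinct because of hypothesis (i) together with the requirement $\Lambda''\subset\CaC\setminus\CaA$ forcing them apart.

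The cleanest route, then, is to invoke the lemma verbatim after reducing the corollary's hypothesis to the lemma's. I would show that if $\Lambda''\subset\CaC\setminus\CaA$ satisfies (i), then automatically it satisfies (ii): if $\Upsilon_i(\mathbf{d})=\Upsilon_j(\mathbf{d}')=\gamma$ were to hold with $\mathbf{d}\neq\mathbf{d}'$, uniqueness in (i) for the index $i$ (when $\gamma\in\Gamma_i$) or a direct argument using $\gamma\notin\Gamma_i$ would force a contradiction — but more carefully, (ii) as stated in the lemma is needed only to prevent one generator serving two lines, and the corollary simply allows $\Lambda''$ to be larger (it no longer claims the generator is unique across indices), so the embedding dimension $\sum_{i\in[q]}(\e(S_i)+\e(T_i))$ is still attained because each $\Gamma_i$ still contributes exactly $\e(T_i)=\sharp\Gamma_i$ new generators and the rays contribute $\e(S_i)$. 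I would finish by noting that $\CaA\supsetneq\{\mathbf{a}_1,\ldots,\mathbf{a}_q\}$, so $\CaC\setminus\CaA\subset\CaC\setminus\{\mathbf{a}_1,\ldots,\mathbf{a}_q\}$, placing us inside the lemma's setting, and that the correction terms $k-\sum_{i}\sharp(M_i)$ collapse to $0$ because the only minimal element of $S$ in $\CaA$ contributing is each $\mathbf{a}_i$ with $\Upsilon_i(\mathbf{a}_i)=0\in\Gamma_i\cup\{0\}$, so that $k=q$ and $\sum_{i}\sharp(M_i)$ accounts for precisely the $0$-terms already folded into the $\e(T_i)$ count.

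The main obstacle I anticipate is bookkeeping the correction term $k-\sum_{i\in[k]}\sharp(M_i)$ and convincing the reader it equals zero under the weaker hypothesis: one must carefully track which elements of $\CaA$ actually appear as minimal generators of the constructed semigroup and what their $\Upsilon_j$-images are in every coordinate $j$, not just the "home" coordinate $i$. This is a finite but fiddly verification, and the temptation is to hand-wave it; the honest version requires unwinding the definitions of $\Lambda'$ and $M_l$ from the paragraph preceding Proposition \ref{proposition_dim} and checking that, for the specific generating set exhibited, no element of $\Lambda''$ lands in $\CaA$ (true by hypothesis) and the only $\CaA$-elements among the generators are the $\mathbf{a}_i$, whose contribution to $\sum_i\sharp(M_i)$ exactly balances $k$. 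Everything else is a direct appeal to Lemma \ref{isomorfismo_en_rayo}, Corollary \ref{minimal_terminos_independientes}, and the lemma immediately above.
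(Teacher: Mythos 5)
Your overall strategy (reduce to the preceding lemma and to Proposition \ref{proposition_dim}, splitting the minimal generators into $\sum_{i\in[q]}\e(S_i)$ on the extremal rays and $\sum_{i\in[q]}\e(T_i)$ off them) is the argument the paper intends — it states the corollary without proof as an instance of the same counting — but your bookkeeping of the correction term rests on a false premise. You claim the minimal elements of $S$ in $\CaA$ are ``the $\mathbf{a}_i$ themselves,'' so that $k=q$ and $\sum_l\sharp(M_l)=q$ cancel. In fact $\mathbf{a}_i\notin S$ at all: the $S_i$ are \emph{non proper} numerical semigroups, so $1\notin S_i$ and every ray generator is $n_j^{(i)}\mathbf{a}_i$ with $n_j^{(i)}\ge 2$; moreover such a multiple cannot lie in $\CaA$, since extremality of $\tau_i$ forces any representation $n\mathbf{a}_i=\sum_k\lambda_k\mathbf{a}_k$ with $0\le\lambda_k\le1$ to have $\lambda_k=0$ for $k\ne i$ and hence $n\le 1$. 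Together with $\Lambda''\cap\CaA=\emptyset$ this gives $\Lambda'=\emptyset$, i.e.\ $k=0$, which is the correct (and simpler) reason the term $k-\sum_l\sharp(M_l)$ vanishes. Note also that your version would not even be self-correcting: if $\mathbf{a}_i$ did belong to $\Lambda'$, one can have $\sharp(M_i)>1$ (in Example \ref{initial_example}, $\Upsilon_3(\mathbf{a}_1)=(1,0)\in\Gamma_3$), so ``$k=q$ balances $q$'' fails in general.

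The second, more substantive gap is the distinctness claim you assert but never prove: that the elements of $\Lambda''$ attached to distinct pairs $(i,\gamma)$ are pairwise distinct, equivalently that condition (ii) of the lemma is automatic once $\Lambda''\subset\CaC\setminus\CaA$. This is exactly what the stronger exclusion of all of $\CaA$ (rather than just $\{\mathbf{a}_1,\ldots,\mathbf{a}_q\}$) buys, and it is what guarantees $\sharp\Lambda''=\sum_{i\in[q]}\e(T_i)$, hence the upper half of the stated equality (Proposition \ref{proposition_dim} only gives the lower bound). The honest verification uses Lemma \ref{lemma_diamante} and minimality in $T_i$: if $\mathbf{d}\in\CaC$ satisfies $\Upsilon_i(\mathbf{d})\in\Gamma_i$, writing $\mathbf{d}=Q+\sum_k\beta_k\mathbf{a}_k$ with $Q\in\CaA$ and using $\Upsilon_i(\mathbf{a}_k)\ne 0$ for $k\ne i$ shows $\mathbf{d}\in Q'+\N\mathbf{a}_i$ with $Q'\in\CaA$; if additionally $\Upsilon_j(\mathbf{d})\in\Gamma_j$ for some $j\ne i$, the same minimality argument in $T_j$ forces $\mathbf{d}\in\{\mathbf{a}_i,\mathbf{a}_i+\mathbf{a}_j\}\subset\CaA$, contradicting $\mathbf{d}\in\CaC\setminus\CaA$. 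Your write-up waves at this (``forcing them apart'') but the corollary is not a verbatim specialization of the lemma without it; with this argument supplied and the $k=0$ correction fixed, the rest of your proposal goes through as in the paper's one-line count for the lemma.
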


Finally, we illustrate the above result with an example.

\begin{example}
Let $S\subset \N^3$ be the semigroup minimally generated by
$$
\begin{multlined}
\Lambda_S=\{(2, 0, 0), (4, 2, 4), (0, 2, 0), (3, 0, 0), (6, 3, 6), (0, 3, 0), (3,
1, 1),\\ (4, 1, 2), (5, 2, 4), (2, 1, 0), (1, 2, 0), (3, 2, 3), (1, 2, 1) \}.
\end{multlined}
$$
Again, the cone $\CaC_S$ is the cone appearing in example \ref{initial_example}. Note that the elements $(2,0,0)$ and $(3,0,0)$ are in $S_1$, $(4,2,4)$ and $(6,3,6)$ belong to $S_2$, and $(0,2,0)$ and $(0,3,0)$ are in $S_3$. Moreover, $\Upsilon_1(\{ (3, 1, 1), (4, 1, 2), (2, 1, 0) \})=\Gamma_1$, $\Upsilon_2(\{(5, 2, 4), (3, 2, 3)\})=\Gamma_2$, $\Upsilon_3(\{(1, 2, 0),(1, 2, 1)\})=\Gamma_3$, and $\Lambda_S\subset \CaC_S\setminus \CaA$. As previous corollary asserts, $\e(S)=13 = 6 + 7 = \sum_{i\in [3]} (\e(S_i) + \e(T_i))$.
\end{example}

\section{On the decomposition of a $\CaC$-semigroup in terms of irreducible $\CaC$-semigroups}\label{sec_irreducible}

We define the set of pseudo-Frobenius of a $\CaC$-semigroup $S$ as
$\textrm{PF}(S)=\{\mathbf{a}\in \CaH(S)\mid \mathbf{a}+(S\setminus\{0\})\subset S\}$, and the set of special gaps of $S$ as
$\textrm{SG}(S)=\{\mathbf{a}\in \textrm{PF}(S)\mid 2\mathbf{a}\in S\}$.
Note that the elements $\mathbf{a}$ of $\textrm{SG}(S)$ are those elements in $\CaC\setminus S$ such that $S\cup \{\mathbf{a}\}$ is again a $\CaC$-semigroup.

A $\CaC$-semigroup is $\CaC$-reducible (simplifying reducible) if it can be expressed as an intersection of two $\CaC$-semigroups containing it properly (see \cite{resolucion_maxima}). Equivalently, $S$ is $\CaC$-irreducible (simplifying irreducible) if and only if $|\textrm{SG}(S)|\le 1$. This definition generalizes the definitions of irreducible numerical semigroup (see \cite{libro_rosales}) and irreducible $\N^p$-semigroup (see \cite{irreducible_GNS}).

Our decomposition method into irreducible is based on adding to a $\CaC$-semigroup elements of $\textrm{SG}(S)$.
If we repeat this operation, we always reach an irreducible $\CaC$-semigroup or the cone $\CaC$. Since the set of gaps $\CaH(S)$ is finite, this process can be performed only a finite number of times. This allows us to state the following algorithm inspired by \cite[Algorithm 4.49]{libro_rosales}.

By definition, the set $\textrm{SG}(S)$ is obtained from $\textrm{PF}(S)$. If $S$ is determined by its minimal generating set, then $\textrm{PF}(S)$ can be computed from the set $\CaH(S)$ obtained with Algorithm \ref{computing_gaps_Csemig}, or using the two different ways given in \cite[Corollary 9 and Example 10]{resolucion_maxima}.

\begin{algorithm}[h]\label{decomposition}
	\KwIn{The minimal generating set $\Lambda_S$ of a $\CaC$-semigroup $S\subset \N^p$.}
	\KwOut{A decomposition of $S$ into irreducible $\CaC$-semigroups.}

\Begin{
    $I\leftarrow \emptyset$\;
    $C\leftarrow \{S\}$\;

    \While{$C\neq \emptyset$}
        {$B\leftarrow \{S'\cup\{\mathbf{a}\} \mid S'\in C,\, \mathbf{a}\in \textrm{SG}(S')\}$\;
        $B\leftarrow B\setminus \{S'\in B\mid \exists \bar{S}\in I\text{ with }  \bar{S}\subset S'\}$\;
        $I\leftarrow I\cup \{S'\in B\mid S' \text{ is irreducible}\}$\;
        $C\leftarrow \{S'\in B\mid S' \text{ reducible}\}$\;
        }

 	\Return $I$.
}
\caption{Computing a decomposition into $\CaC$-semigroups.}
\end{algorithm}

\begin{example}

Consider the $\CaC$-semigroup $S$ given in examples \ref{initial_example} and \ref{ejemploHuecos}. It is minimally generated by
$$
\begin{multlined}
\Lambda_S=\{
(2, 0, 0), (4, 2, 4), (0, 1, 0), (3, 0, 0), (6, 3, 6), (3, 1, 1), (4,
1, 1),\\ (3, 1, 2), (1, 1, 0), (3, 2, 3), (1, 2, 1)
 \},
\end{multlined}
$$
with
$$
\begin{multlined}
\CaH(S)=\{ (1,0,0), (1,1,1), (2,1,1), (2,1,2), (2,2,1), (2,2,2), (2,3,2),\\ (4,1,2), (4,2,3), (5,2,4), (5,3,5), (8,4,7) \}.
\end{multlined}$$
Hence, $\textrm{PF}(S)=\{(2, 2, 1), (2, 3, 2), (4, 1, 2), (8, 4, 7)\}$, and  $\textrm{SG}(S)$ is equal to $\textrm{PF}(S)$.

Applying Algorithm \ref{decomposition} to $S$, we obtain the decomposition into six irreducible $\CaC$-semigroups, $S=S_1\cap\cdots\cap S_{6}$ where

\begin{itemize}
\item $S_1=\langle (3, 0, 0), (2, 0, 0), (1, 1, 0), (0, 1, 0), (4, 1, 1), (3, 1, 1), (3, 1, 2), (4, 1, 2),$ $(1, 2, 1), (2, 2, 1), (2, 2, 2), (3, 2, 3), (4, 2, 4), (6, 3, 6)
 \rangle$;
\item $S_2=\langle (3, 0, 0), (2, 0, 0), (1, 1, 0), (0, 1, 0), (4, 1, 1), (3, 1, 1), (2, 1, 2), (3, 1, 2),$ $(1, 2, 1), (2, 2, 1), (3, 2, 3)
 \rangle$;
\item $S_3=\langle (1, 0, 0), (0, 1, 0), (2, 1, 1), (3, 1, 2), (1, 2, 1), (3, 2, 3), (4, 2, 4), (5, 3, 5),$ $(6, 3, 6)
 \rangle$;
\item $S_4=\langle (3, 0, 0), (2, 0, 0), (1, 1, 0), (0, 1, 0), (2, 1, 1), (1, 1, 1), (3, 1, 2), (4, 1, 2),$ $(3, 2, 3), (4, 2, 4), (6, 3, 6)
 \rangle$;
\item $S_5=\langle (3, 0, 0), (2, 0, 0), (1, 1, 0), (0, 1, 0), (2, 1, 1), (1, 1, 1), (3, 1, 2), (3, 2, 3),$ $(4, 2, 4), (5, 2, 4), (6, 3, 6)
 \rangle$;
\item $S_6=\langle (3, 0, 0), (2, 0, 0), (1, 1, 0), (0, 1, 0), (4, 1, 1), (3, 1, 1), (2, 1, 2), (3, 1, 2),$ $(1, 2, 1), (3, 2, 3), (4, 2, 3) \rangle$;
\end{itemize}

To get these semigroups we have used our implementation in \cite{PROGRAMA} by typing the following
\begin{verbatim}
Csemigroup([[2,0,0],[4,2,4],[0,1,0],[3,0,0],[6,3,6],[3,1,1],
[4,1,1],[3,1,2],[1,1,0],[3,2,3],[1,2,1]]).DecomposeIrreducible()
\end{verbatim}

\end{example}

\subsubsection*{Acknowledgements}
The authors were partially supported by Junta de Andaluc\'{\i}a research group FQM-366.
The first author was supported by the Programa Operativo de Empleo Juvenil 2014-2020, financed by the European Social Fund within the Youth Guarantee initiative.
The second, third and fourth authors were partially supported by the project MTM2017-84890-P (MINECO/FEDER, UE), and the fourth author was partially supported by the project MTM2015-65764-C3-1-P (MINECO/FEDER, UE).

\end{document}